\renewcommand{\hat}[1]{\widehat{#1}}
\newcommand{\RR}{\mathbb R}
\newcommand{\trace}{{\rm trace}}
\definecolor{greenf}{rgb}{.054, .5, .005}
\newtheorem{remark}[theorem]{Remark}
\begin{document}
\title{Detecting when one probe vector is enough for preconditioned log-determinant approximation
\thanks{Version of \today.} }{}
\date{\today}
\author{Alice Cortinovis\thanks{Department of Computer Science, University of Pisa, PI, Italy. Email: {\tt  alice.cortinovis@unipi.it}} \and
 Daniele Toni\thanks{Scuola Normale Superiore di Pisa, PI, Italy. Email: {\tt  daniele.toni@sns.it}
}
}

\maketitle

\begin{abstract}
We present randomized algorithms for estimating the log-determinant of regularized symmetric positive semi-definite matrices. The algorithms access the matrix only through matrix vector products, and are based on the introduction of a preconditioner and stochastic trace estimator.
We claim that preconditioning as much as we can and making a rough estimate of the residual part with a small budget achieves a small error in most of the cases. We choose a Nystr\"om preconditioner and estimate the residual using only one sample of stochastic Lanczos quadrature (SLQ). We analyze the performance of this strategy from a theoretical and practical viewpoint. We also present an algorithm that, at almost no additional cost, detects whether the proposed strategy is not the most effective, in which case it uses more samples for the SLQ part. Numerical examples on several test matrices show that our proposed methods are competitive with existing algorithms.
\end{abstract}

\begin{keywords}
Randomized numerical linear algebra, trace estimation, Krylov methods, Nyström approximation, preconditioning, log-determinant, stochastic Lanczos quadrature.
\end{keywords}

\begin{AMS}
68W20, 65F40, 65C05, 65F08
\end{AMS}


\section{Introduction}
Given a symmetric positive semidefinite (SPSD) matrix $A \in \RR^{n\times n}$, we are interested in the computation of its regularized log-determinant
\begin{equation}\label{eq:logdet}
    \log\det(A+ I).
\end{equation}
This quantity arises in many applications: for instance, in machine learning and in statistics, when dealing with the discretization of covariance kernels, see e.g.~\cite{RasmussenWilliams06,Seeger04}, and it plays a central role in the log-marginal likelihood for Gaussian processes~\cite{CoverThomas06,GardnerPleissWeinbergerBindelWilson18}, which appears in hyperparameter optimization~\cite{WengerPleissHennigCunninghamGardner22} and in the normalization of the determinantal point processes for supervised learning~\cite{KuleszaTaskar12}. 

In principle,~\eqref{eq:logdet} can be expensive to compute, requiring up to $\mathcal O(n^3)$ operations. A popular strategy to \emph{approximate} this quantity in the case of a numerically low-rank matrix $A$ is to find a rank-$\ell$ approximation $S$ of $A$ and compute
\begin{equation}
	\label{eq:lowrank}
	\tag{low-rank}
	\log\det(A+I) \approx \log\det(S + I);
\end{equation}
see, e.g.~\cite{LiZhu21,PerssonKressner23,PerssonMeyerMusco25,SaibabaAlexanderianIpsen17}. Other strategies are based on the relation 
\begin{equation*}
	\log \det(A + I) = \trace \log(A + I),
\end{equation*}
where $\log(A+I)$ is intended as a matrix function~\cite{Higham08}. This expression is the basis for stochastic Lanczos quadrature (SLQ)~\cite{UbaruChenSaad17}, which combines a randomized method to estimate the trace of a matrix with a Krylov subspace method to approximate quadratic forms involving a matrix function. More specifically, for the symmetric matrix $\log(A+I)$, given $N$ isotropic independently drawn random vectors  $w_1, \ldots, w_N$, the Girard-Hutchinson estimator~\cite{Girard89,Hutchinson90} constructs an unbiased approximation of the trace as
\begin{equation} \label{eq:gh}
\tag{SLQ}
	\trace\log(A+I) \approx \frac{1}{N}\sum_{i=1}^N w^T_i \log(A+I) w_i.  
\end{equation}
In the case of standard Gaussian random vectors, the variance of this estimator is $\frac{2}{N} \| \log(A + I) \|_F^2$. The quadratic forms $w^T_i \log(A+I) w_i$ can be approximated using $m$ steps of the Lanczos algorithm and the convergence is linked to the spectral distribution of the matrix $A + I$; see~\cite{GolubMeurant10} and Section~\ref{sec:lanczos} below.

To reduce the variance of~\eqref{eq:gh} and improve the convergence of the Krylov method for the approximation of the quadratic forms, one can introduce a symmetric positive definite (SPD) preconditioner $P \in \RR^{n \times n}$ and leverage the fact that
\begin{equation}\label{eq:preclog}
	\trace \log(A+I) = \trace \log P + \trace \log\left(P^{-\frac{1}{2}}(A+\ I)P^{-\frac{1}{2}}\right),
\end{equation}
which follows from the multiplicative property of the determinant. With a suitable choice of $P$, the term $\trace \log(P)$ can be computed cheaply and exactly, and the second term can be estimated via~\eqref{eq:gh}. 
This idea has been explored, for instance, in~\cite{FengKulickTang24,WengerPleissHennigCunninghamGardner22}.
A good preconditioner can be constructed as $P = S + I$, where $S$ is a low-rank approximation of $A$. In this sense,~\eqref{eq:lowrank} corresponds to the approximation of~\eqref{eq:preclog} in which the last term is completely ignored. A black-box preconditioner can be constructed by taking, as $S$, the Nystr\"om approximation~\cite{HalkoMartinssonTropp11,LiLindermanSzlamStantonKlugerTygert17}:
\begin{equation}\label{eq:nystrom}
	\hat{A}_{\ell}:=A\Omega (\Omega^T A \Omega)^\dagger\Omega^TA,
\end{equation}
for $\ell := k+p$, where $k$ is a target rank and $p$ is an oversampling parameter, $\Omega \in \RR^{n \times \ell}$ is a random Gaussian matrix with i.i.d.~$\mathcal N_{0,1}$ entries and $\dagger$ denotes the pseudoinverse of a matrix. It was shown in~\cite{FrangellaTroppUdell23} that, under suitable assumptions, letting $\hat P_\ell:=\hat A_\ell+I$, the preconditioned matrix $\hat M_\ell:=\hat P_\ell^{-\frac{1}{2}}(A+I)\hat P_\ell^{-\frac12}$ is well conditioned with high probability. This means that, when applying~\eqref{eq:gh} to the second term of~\eqref{eq:preclog}, both its variance and the number of Lanczos iterations are under control.

\subsection{Contributions} \label{sec:1s_strategy}

We propose and analyze a simple instance of~\eqref{eq:preclog}, where we use almost all the computational budget for the computation of a Nystr\"om preconditioner and we use a \emph{single} Gaussian random vector for estimating the log-determinant of the preconditioned matrix $\hat M_\ell \in \RR^{n \times n}$, i.e.
\begin{equation}\label{eq:residual}
\trace \log \left(\hat M_\ell \right) \approx w^T \log\left(\hat M_\ell\right)w .
\end{equation}
We show -- in practice and in theory -- that this strategy is effective when the matrix $A$ has at least moderate spectral decay. 
The advantage of our one-sample strategy over all possible instances of~\eqref{eq:preclog} is twofold: having a good preconditioner reduces the variance of the Girard-Hutchinson trace estimator, and the condition number of the matrix $\hat M_{\ell}$ is small so Lanczos quadrature is expected to be precise with only a small number of steps, see Figure~\ref{fig:Lanczos}. 

The only cases in which the one-sample strategy is not competitive is when $A$ has very slow singular value decay. To address the issue, we develop an adaptive strategy (log-det-ective, Algorithm~\ref{alg:logdetective}) that, in case low quality of the low-rank approximation is detected, changes the budget allocation to allow for more samples for the Girard-Hutchinson trace estimator. The numerical examples in Section~\ref{subsec:methods} show the effectiveness of this algorithm.

\subsection{Relation with existing literature} 
Since the vanilla Girard-Hutchinson trace estimator exhibits slow convergence, a variety of variance reduction techniques that apply to any symmetric (or SPSD) matrix have been developed. Most notably, Hutch++~\cite{MeyerMuscoWoodruff21} combines the Girard-Hutchinson estimator with randomized low-rank approximation, and an adaptive version (A-Hutch++) was developed in~\cite{PerssonCortinovisKressner22}. In the context of numerically low-rank matrices, these techniques have been extended and proved to be effective for matrix functions, relying on the identity $\trace f(A) = \trace f(P) + \trace \left (f(A)-f(P)\right )$, for some low-rank approximation $P$ of $A$. For instance, the paper~\cite{PerssonKressner23} proposes funNys++ for the case of matrix monotone functions.
For the specific case of the logarithm $f(x) = \log(1+x)$, these methods still need to apply the Lanczos quadrature to the original matrix $A$.
This means that they generally compare unfavorably with methods that leverage~\eqref{eq:preclog} to apply Lanczos quadrature to the preconditioned matrix instead, as we illustrate in Figure~\ref{fig:comparison_methods}.

The idea of ``preconditioning'' the problem of computing traces of matrix functions has been used, implicitly, in the Krylov-aware algorithm~\cite{ChenHallman23}; see also~\cite{ChenHuberLinZaid26}. Other methods, tailored to log-determinant computations, leverage the explicit preconditioner~\eqref{eq:preclog}: for instance, this was used in~\cite{SaibabaAlexanderianIpsen17} and~\cite{PerssonKressner23} to justify ignoring the $\mathrm{trace}\log \left ( P^{-1/2}(A+I)P^{-1/2}\right )$ and concentrating on finding a good matrix $P$. 
Other works combine the benefit of both variance reduction and preconditioning in this context: in~\cite{GardnerPleissWeinbergerBindelWilson18} the incomplete Cholesky preconditioner is used, in~\cite{WengerPleissHennigCunninghamGardner22} the authors analyzed the effectiveness of various preconditioners, and they proposed, in the numerical experiments, to split the budget of matvecs evenly between the computation of the preconditioner and SLQ; despite being very general, their analysis involves error bounds which depend on the size of the matrix $A$. In~\cite{FengKulickTang24}, the authors choose $P$ as the Nyström preconditioner of $A$, but no choice of parameters for the budget allocation and no convergence analysis is proposed. In our case, the analysis we propose for the Nystr\"om preconditioner is independent from the size of $A$.

Lastly, the idea of using a \emph{single} sample for trace estimation problems involving the Girard-Hutchinson estimator has appeared before and has been proven to be effective for a number of problems. More specifically, it has been used for trace estimation of matrices which are available via matrix-vector multiplications, XTrace~\cite{EpperlyTroppWebber24}, in the context of randomized probing methods~\cite{FrommerRinelliSchweitzer25}, and for spectral gap estimation problems~\cite{BenziRinelliSimunec24}. The idea of using only one sample for (preconditioned) log-determinant computations, to the best of our knowledge, is new.

\subsection{Outline}
In Section~\ref{sec:idealized} we analyze the proposed one-sample strategy for a fixed budget of total matvecs, considering an ideal preconditioner and quadratic forms involving the logarithm exactly computed. We then discuss and compare the obtained error with the low-rank strategy~\eqref{eq:lowrank} and instances of~\eqref{eq:preclog} where we split  the number of total matvecs among the creation of a preconditioner and the estimation of the trace of the preconditioned matrix according to some parameter. In Section~\ref{sec:nystrom} we derive an upper bound for the error produced by the one-sample strategy when using the Nyström preconditioner, assuming exact quadratic forms.
In Section~\ref{sec:detective_strategy} we propose Algorithm~\ref{alg:logdetective}, named \say{{log-det-ective}}, which re-allocates some budget of matvecs whenever it detects that the preconditioner is not good enough. 
Section~\ref{sec:experiments} contains numerical experiments on several matrices with various spectral decays: we plot the bounds obtained in Section~\ref{sec:nystrom} and we compare them with the bounds available for strategy~\eqref{eq:lowrank}, we show the behavior of the error produced by the approximation of the quadratic form and the behavior of the log-det-ective algorithm for different parameters, and we compare our proposed method with other existing strategies.

\section{Analysis of the proposed strategy in the idealized case} \label{sec:idealized}

We begin our analysis under simplified assumptions: the capability of computing exact quadratic forms with the logarithm (with a cost of $m$ matvecs) and the knowledge of an ideal preconditioner $P$ obtained from the best rank-$\ell$ approximation of $A$ (with a cost of $\ell$ matvecs). More specifically, let  $A=U \Lambda U^T$ be a spectral decomposition of $A$, where $U \in \mathbb{R}^{n \times n}$ is orthogonal and $\Lambda = \mathrm{diag}(\lambda_1, \ldots, \lambda_n)$ with $\lambda_1 \ge \ldots \ge \lambda_n \ge 0$ is the matrix containing the eigenvalues of $A$ in decreasing order. We consider the following preconditioner:
\begin{equation}\label{eq:idealP}
P_\ell := U_{\ell}\Lambda_\ell U_{\ell}^T+I = A_{\ell} + I,
\end{equation}
where $U_\ell \in \RR^{n \times \ell}$ is made of the first $\ell$ columns of $U$, $\Lambda_\ell \in \RR^{\ell \times \ell}$ is the leading $\ell \times \ell$ submatrix of $\Lambda$, and $A_\ell$ is the best rank-$\ell$ approximation of $A$ (in any unitarily invariant norm) by the Eckart-Young theorem. In this case, $\det P_\ell = \det(\Lambda_\ell + I)= \prod_{i=1}^\ell(1+\lambda_i)$ is cheap to compute, and the preconditioned matrix~\eqref{eq:residual} is
\begin{equation}\label{eq:Mk}
M_\ell = P_\ell^{-\frac12} (A+I) P_\ell^{-\frac12} = U \begin{bmatrix} I_\ell & \\ & \bar\Lambda_\ell + I_{n-\ell} \end{bmatrix} U^T,
\end{equation}
where $\bar\Lambda_\ell \in \RR^{(n-\ell)\times(n-\ell)}$ is the diagonal matrix containing the trailing $n-\ell$ eigenvalues of $A$.
We now take one sample for the Girard-Hutchinson trace estimator applied to $M_\ell$ and approximate
\begin{equation}\label{eq:onesample-idealized}
\tag{idealized one-sample}
\trace \log(A+I) \approx \trace \log\left( P_\ell \right) + w^T \log(M_\ell) w,
\end{equation}
where $w$ is a Gaussian random vector. We denote the error of this ``idealized'' one-sample approximation as 
\begin{align} \label{eq:err1S_ideal}
\mathrm{err_{\mathrm{1S_{id}}}}(A,\ell,w) := 
\left \lvert \trace\log(A+I) - \trace\log(P_\ell) - w^T(\log(M_\ell))w \right \rvert 
     = \left \lvert \trace \log(M_\ell) - w^T(\log(M_\ell))w \right \rvert.  
\end{align} 
Therefore, we have
\begin{equation} \label{eq:experrideal-one}
\mathbb{E}\left [\mathrm{err}_{\mathrm{1S_{id}}}^2(A,\ell,w)\right ] = 2 \|\log(M_\ell)\|_F^2 = 2\sum_{i=\ell+1}^n \log^2(1+\lambda_i).
\end{equation}
Conversely, if we simply apply the approximation~\eqref{eq:lowrank}, this time with a rank-$(\ell+m)$ preconditioner (to take into account the fact that we have extra budget because we do not compute the quadratic form), the error we obtain is
\begin{equation} \label{eq:errTrunc_ideal}
    \mathrm{err}_{\mathrm{LR_{id}}}(A,\ell+m) := \trace \log(A+I) - \trace \log (P_{\ell+m}) = \left \lVert \log(M_{\ell+m})\right \rVert_* = \sum_{i=\ell+m+1}^n \log(1+\lambda_i),
\end{equation}
where $\| \cdot \|_*$ denotes the nuclear norm, and $\mathrm{err}_{\mathrm{LR_{id}}}(A,\ell+m) \ge 0$ because $A+I \succeq P_\ell$. Informally, comparing~\eqref{eq:experrideal-one} and~\eqref{eq:errTrunc_ideal} we see that using one sample for the Girard-Hutchinson trace estimator improves the accuracy of the approximation we perform, making the error proportional to the Frobenius norm of the logarithm of the preconditioned matrix $M_\ell$, instead of its nuclear norm. This is advantageous unless $m$ is large and the matrix $A$ has a very fast spectral decay.

Moreover, we can consider a ``mixed'' strategy that uses the best rank-$\lfloor\alpha \ell\rfloor$ approximation of the matrix for some $\alpha \in [0,1]$ and $\left \lfloor\frac{\ell+m -\lfloor\alpha \ell \rfloor}{m}\right \rfloor$ samples for the Girard-Hutchinson estimator (for simplicity, from now on we assume that both $\alpha\ell$ and $\frac{\ell+m -{\alpha \ell}}{m}$ are integers), which is
\begin{equation} \label{eq:mixed_strategy} \tag{\text{idealized $\alpha$-rank}} 
\trace \log(A+I) \approx \trace\log\left(P_{\alpha \ell}\right) + \frac{m}{(1-\alpha)\ell+m} \sum_{i=1}^{\frac{(1-\alpha)\ell+m}{m}} w_i^T \log(M_{\alpha \ell}) w_i.
\end{equation}
Note that, for small values of $\alpha$, the method is close to strategy~\eqref{eq:gh}, while for large values of $\alpha$ it is close to~\eqref{eq:onesample-idealized}. 
This produces the error
\begin{equation*}
    \mathrm{err}_{\mathrm{\alpha R_{id}}}(A,\ell,m,\alpha,w_I) := \left \lvert \trace \log(A+I) - \trace\log\left(P_{\alpha \ell}\right) - \frac{m}{(1-\alpha)\ell+m} \sum_{i=1}^{\frac{(1-\alpha)\ell+m}{m}} w_i^T \log(M_{\alpha \ell}) w_i \right \rvert,
\end{equation*}
where we use $w_I$ as a shorthand for the set of random vectors $w_1, \ldots, w_{\frac{(1-\alpha)\ell+m}{m}}$.
In expectation, we have
\begin{equation} \label{eq:errMix_ideal}
    \mathbb{E}\left [\mathrm{err}_{\mathrm{\alpha R_{id}}}^2(A,\ell,m,\alpha,w_I)\right ] = \frac{2m}{(1-\alpha)\ell+m} \left \lVert \log(M_{\alpha \ell})\right \rVert_F^2.
\end{equation}
Therefore, for a fixed value of $\alpha$, using the mixed estimator~\eqref{eq:mixed_strategy} improves over the one-sample estimator~\eqref{eq:onesample-idealized} whenever 
\begin{equation} \label{eq:1S_vs_mix_ideal}
\frac{m}{(1-\alpha)\ell+m} \left \lVert \log(M_{\alpha \ell})\right \rVert _F^2 \le \left \lVert \log(M_{\ell})\right \rVert _F^2.
\end{equation}
Informally, the condition~\eqref{eq:1S_vs_mix_ideal} is satisfied when $A$ has slow spectral decay or when $\ell$ highly overestimates the numerical rank of the matrix $A$.

Figure~\ref{fig:optprec} shows a comparison of the average errors~\eqref{eq:experrideal-one},~\eqref{eq:errTrunc_ideal}, and~\eqref{eq:errMix_ideal} for six matrices, described in Section~\ref{subsec:matrices}, for a budget of matvecs ranging from $110$ to $1010$ and $m = 10$. Specifically, the plots show the square root of the error~\eqref{eq:experrideal-one} in 
thick blue lines, the error~\eqref{eq:errTrunc_ideal} in thick orange lines, and the square root of the error~\eqref{eq:errMix_ideal} computed for values of $\alpha$ ranging from $0$ {(which gives the approximation~\eqref{eq:gh})} to $0.9$ in thin lines ranging from purple (0) to blue (0.9). All expected errors have been normalized by the target quantity $\trace\log(A+I)$.

\begin{figure}[ht]
    \centering    \includegraphics[width=\textwidth]{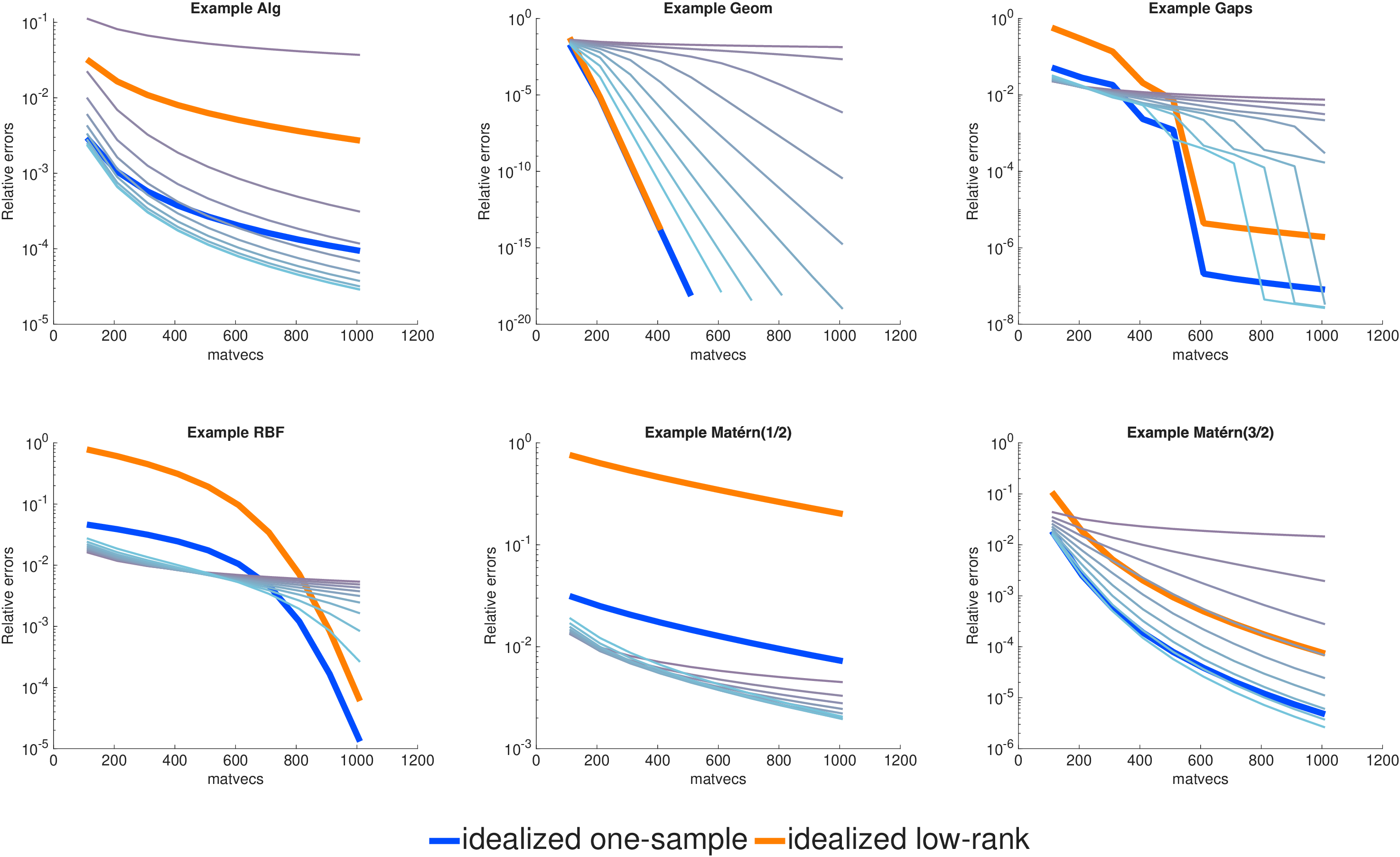}
    \caption{Square root of the variance~\eqref{eq:experrideal-one} for the~\eqref{eq:onesample-idealized} strategy divided by $\trace\log(A+I)$, error~\eqref{eq:errTrunc_ideal} for the~\eqref{eq:lowrank} strategy divided by $\trace\log(A+I)$ and square root of the variance~\eqref{eq:errMix_ideal} divided by $\trace\log(A+I)$ for~\eqref{eq:mixed_strategy} strategies for $\alpha=0,\ldots,0.9$, represented by thin lines, shading from purple ($\alpha=0$) to blue ($\alpha=0.9$) considering $m=10$ and the matrices $A\in \RR^{4000\times 4000}$ described in Section~\ref{subsec:matrices}.}
    \label{fig:optprec}
\end{figure}

\section{Using the Nystr\"om preconditioner}
\label{sec:nystrom}
Recalling the Nystr\"om approximation~\eqref{eq:nystrom}, our chosen preconditioner is
\begin{equation*}
\hat{P}_{\ell} := \hat A_{\ell} + I = A\Omega (\Omega^T A \Omega)^\dagger\Omega^TA + I,
\end{equation*}
where $\Omega \in \RR^{n\times\ell}$ is a Gaussian random matrix.
We call $\hat M_{\ell}$ the preconditioned matrix obtained from the Nyström preconditioner, namely
\begin{equation}\label{eq:NystromMkp}
\hat{M}_{\ell} := \hat P_{\ell}^{-\frac12} (A+I) \hat P_{\ell}^{-\frac12}.
\end{equation}
The approximation we consider becomes
\begin{equation}\label{eq:onesample-nystrom} \tag{one-sample}
\trace\log(A+I) \approx \trace\log\left(\hat P_\ell\right) + w^T \log\left(\hat M_\ell\right) w,
\end{equation}
where $w$ is a Gaussian random vector.
Let us also define the mixed strategy in the context of Nyström preconditioner, that is
\begin{equation} \label{eq:mixed_strategy-nystrom} \tag{\text{$\alpha$-rank}} 
\trace \log(A+I) \approx \trace\log\left(\hat P_{\alpha \ell}\right) + \frac{m}{(1-\alpha)\ell+m} \sum_{i=1}^{\frac{(1-\alpha)\ell+m}{m}} w_i^T \log(\hat M_{\alpha \ell}) w_i.
\end{equation}
In Section~\ref{subsec:analysis1sample} we derive a bound for the error attained by the approximation~\eqref{eq:onesample-nystrom} with the Nystr\"om preconditioner, and in Section~\ref{subsec:analysisOther} we do the same for~\eqref{eq:mixed_strategy-nystrom} and~\eqref{eq:lowrank}; in this section, we again assume that the quadratic forms with the logarithm are computed exactly using $m$ matvecs. Therefore, the total budget for evaluating~\eqref{eq:onesample-nystrom} is $\ell+m$ matvecs with $A$. 
We denote the error of~\eqref{eq:onesample-nystrom} as 
\begin{align}  \label{eq:err1S_Nystrom}
\mathrm{err_{\mathrm{1S_{Nys}}}}(A,\ell,w) :=  \left \lvert \trace \log\left(\hat M_{\ell}\right) - w^T\log\left(\hat M_{\ell}\right)w \right \rvert.
\end{align}

\subsection{Analysis of the one-sample strategy}\label{subsec:analysis1sample}
Using the law of total expectation, it follows from~\eqref{eq:err1S_Nystrom} that
\begin{align} \label{eq:experr1s_Nystrom}
\mathbb{E}\left[\mathrm{err}_{\mathrm{1S_{Nys}}}(A,\ell,w) \right]^2 \nonumber &= \mathbb{E}_\Omega \left[ \mathbb{E}_w\left[\mathrm{err}_{\mathrm{1S_{Nys}}}(A,\ell,w) \vert \Omega \right] \right]^2 \le \mathbb{E}_{\Omega}\left[ \left( \mathbb{E}_w\left[\mathrm{err}^2_{\mathrm{1S_{Nys}}}(A,\ell,w) \vert \Omega \right]\right)^{\frac{1}{2}} \right]^2 \nonumber 
\\ &= \mathbb{E}_{\Omega} \left[\left(2\left \lVert \log(\hat M_{\ell})\right \rVert^2_F\right)^{\frac{1}{2}} \right]^2
 = 2 \mathbb{E}_{\Omega} \left[\left \lVert \log(\hat M_{\ell})\right \rVert_F\right]^2,
\end{align}
where the inequality follows by the Cauchy-Schwarz inequality, and the second equality is the variance of the Girard-Hutchinson estimator. We use the expressions $\mathbb{E}_\Omega$ and $\mathbb{E}_w$ to denote the fact that the expectation is taken with respect to the random variable $\Omega$ or $w$, respectively. To derive a bound for the right-hand side of~\eqref{eq:experr1s_Nystrom}, we need to recall some general results on the  Nyström approximation. Since $A \succeq \hat A_{\ell} \succeq 0$ (see, e.g.,~\cite[Lemma 1]{GittensMahoney16}), we have that $\hat{P}_{\ell} = \hat{A}_{\ell} + I \succeq I$. Moreover, $x \mapsto \log(1+x)$ is an operator monotone function, that is, a function that preserves the Löwner order of SPSD matrices, see~\cite[Example 3.6]{Chansangiam13}; we thus have 
\begin{equation} \label{eq:lognysorder}
    \log(A+I)\succeq \log(\hat A_{\ell}+I)  = \log \left ( \hat P_{\ell} \right )\succeq 0.
\end{equation}
Moreover, we can write 
\begin{equation} \label{eq:equivalentMkp}
     \hat M_{\ell} = \hat P^{-\frac{1}{2}}_{\ell} \left(A+I\right) \hat P^{-\frac{1}{2}}_{\ell} = \hat P_{\ell}^{-\frac12} \left ( A - \left (\hat P_{\ell} - I \right) \right ) \hat P_{\ell}^{-\frac12} + I = \hat P_{\ell}^{-\frac12} \left ( A - \hat A_{\ell} \right ) \hat P_{\ell}^{-\frac12} + I,
\end{equation}
which implies $\hat M_\ell \succeq I$ and 
\begin{equation}\label{eq:logMl}
\log\left(\hat M_\ell\right) = \log\left(\hat P_{\ell}^{-\frac12} \left ( A - \hat A_{\ell} \right ) \hat P_{\ell}^{-\frac12}+I\right)\succeq 0.
\end{equation}
We here give a bound of the expectation of the Frobenius norm of the matrix $\log(\hat M_{\ell})$, taking advantage of the properties above.
\begin{lemma} \label{lemma:NysMk_fro}
For any SPSD matrix $A \in \mathbb{R}^{n \times n}$, for any $k\ge0$ and $p \ge 2$ such that $k+p=\ell$, with $\ell \ge 4$, we have
\begin{equation} 
\mathbb{E} \left[\left \lVert \log\left(\hat M_{\ell}\right)\right \rVert_F\right]^2 \le \left( 1+ \frac{k}{p-1} \right) \log \left( 1+ \frac{2e^2(k+p)}{p^2-1}\left \lVert \bar\Lambda_k\right \rVert_* + \left(1+\frac{2k}{p-1}\right) \left\lVert \bar\Lambda_k \right \rVert_2 \right)  \left \lVert \log\left(\bar\Lambda_k+I\right) \right \rVert_*,   
\end{equation}
where $\bar \Lambda_k \in \RR^{(n-k)\times (n-k)}$ is the diagonal matrix that contains the trailing $n-k$ eigenvalues of $A$. 
\end{lemma}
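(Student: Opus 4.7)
The plan is to reduce $\|\log(\hat M_\ell)\|_F^2$ to a product of the spectral and nuclear norms of $\log(\hat M_\ell)$, and then control these two factors separately using standard expectation bounds for the Gaussian Nystr\"om approximation. The key observation is that $\hat M_\ell \succeq I$ by~\eqref{eq:equivalentMkp}, so $\log(\hat M_\ell)$ is SPSD and all of its eigenvalues are non-negative. This immediately gives the elementary inequality $\|\log(\hat M_\ell)\|_F^2 = \sum_i \lambda_i(\log(\hat M_\ell))^2 \le \|\log(\hat M_\ell)\|_2 \cdot \|\log(\hat M_\ell)\|_*$. Taking square roots, applying expectation, and using the Cauchy--Schwarz inequality on the product of $\sqrt{\|\log(\hat M_\ell)\|_2}$ and $\sqrt{\|\log(\hat M_\ell)\|_*}$ yields
\[
\mathbb{E}[\|\log(\hat M_\ell)\|_F]^2 \le \mathbb{E}[\|\log(\hat M_\ell)\|_2] \cdot \mathbb{E}[\|\log(\hat M_\ell)\|_*],
\]
so the task reduces to bounding the two factors on the right-hand side.

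For the spectral factor, I would use~\eqref{eq:equivalentMkp} together with $\hat P_\ell \succeq I$ (whence $\|\hat P_\ell^{-1}\|_2 \le 1$) to get $\|\hat M_\ell\|_2 \le 1 + \|A - \hat A_\ell\|_2$, and then invoke monotonicity of $\log$ on $[1,\infty)$ (applicable because $\hat M_\ell \succeq I$) to conclude $\|\log(\hat M_\ell)\|_2 \le \log(1 + \|A - \hat A_\ell\|_2)$. Jensen's inequality applied to the concave $\log$ gives $\mathbb{E}[\|\log(\hat M_\ell)\|_2] \le \log(1 + \mathbb{E}[\|A - \hat A_\ell\|_2])$, and substituting the expected spectral-error bound for the Gaussian Nystr\"om approximation from~\cite{FrangellaTroppUdell23}, namely $\mathbb{E}[\|A - \hat A_\ell\|_2] \le (1 + \tfrac{2k}{p-1})\|\bar\Lambda_k\|_2 + \tfrac{2e^2(k+p)}{p^2-1}\|\bar\Lambda_k\|_*$, reproduces exactly the logarithmic factor in the statement.

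For the nuclear factor, since $\log(\hat M_\ell) \succeq 0$, the multiplicative property of the determinant gives the exact identity $\|\log(\hat M_\ell)\|_* = \trace\log(\hat M_\ell) = \trace\log(A+I) - \trace\log(\hat A_\ell + I)$. Because $f(x) = \log(1+x)$ is operator monotone on $[0,\infty)$ with $f(0) = 0$, I would then invoke the Nystr\"om expectation bound for operator monotone functions from~\cite{PerssonKressner23}, which gives $\mathbb{E}[\trace f(A) - \trace f(\hat A_\ell)] \le \bigl(1 + \tfrac{k}{p-1}\bigr) \sum_{i>k} f(\lambda_i) = \bigl(1 + \tfrac{k}{p-1}\bigr) \|\log(\bar\Lambda_k + I)\|_*$. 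Multiplying the spectral-factor and nuclear-factor bounds produces exactly the inequality of the lemma. The main obstacle I foresee is essentially bookkeeping: confirming that both cited Nystr\"om expectation bounds hold with precisely the constants in the statement, while the conceptual core is the $\|\cdot\|_F^2 \le \|\cdot\|_2\|\cdot\|_*$ decomposition combined with Cauchy--Schwarz, which lets the right-hand side simultaneously reflect the spectral tail (through the $\log$) and the nuclear-norm decay (through $\|\log(\bar\Lambda_k + I)\|_*$).
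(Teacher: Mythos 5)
Your proposal is correct and follows essentially the same route as the paper's proof: the decomposition $\|\cdot\|_F^2 \le \|\cdot\|_2\|\cdot\|_*$ combined with Cauchy--Schwarz, the spectral factor controlled via $\hat P_\ell \succeq I$, Jensen, and~\cite[Proposition 2.2]{FrangellaTroppUdell23}, and the nuclear factor via the trace identity and~\cite[Theorem 3.9]{PerssonKressner23}. The only cosmetic difference is that you justify the Frobenius--spectral--nuclear inequality through the non-negativity of the eigenvalues of $\log(\hat M_\ell)$, whereas the paper invokes it as a general matrix inequality.
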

\begin{proof}
We have
\begin{align}
\mathbb{E} \left[\left \lVert \log\left(\hat M_{\ell}\right)\right \rVert_F\right]^2 
& \overset{}{\le}  \mathbb{E}\left [ \left \lVert \log\left(\hat M_{\ell}\right)\right \rVert^\frac{1}{2}_* \left \lVert \log\left(\hat M_{\ell}\right)\right \rVert^\frac{1}{2}_2\right ]^2 \overset{}{\le} \mathbb{E} \left [ \left \lVert \log\left(\hat M_{\ell}\right)\right \rVert_* \right ]  \mathbb{E} \left [ \left \lVert \log\left(\hat M_{\ell}\right)\right \rVert_2\right],
 \label{eq:proof1}
\end{align} 
where the first inequality follows from the fact that we have $\|H\|_F \le \|H\|^{\frac{1}{2}}_* \|H\|^{\frac{1}{2}}_2$ for any matrix $H$ and the second 
inequality follows from Cauchy-Schwarz. Let us bound the two terms appearing in the right-hand-side of~\eqref{eq:proof1} separately. We have
\begin{align*}
\left \lVert \log\left(\hat M_{\ell}\right)\right \rVert_* &\mathrel{\overset{\eqref{eq:logMl}}{=}} \trace \log\left(\hat P^{-\frac{1}{2}}_{\ell} (A+I)\hat P^{-\frac{1}{2}}_{\ell}\right) = \trace \log (A+I)-\trace \log\left(\hat P_{\ell}\right) \\& \hspace{1.525mm}= \trace \left(\log (A+I)- \log\left(\hat P_{\ell}\right)\right)  \overset{\eqref{eq:lognysorder}}{=} \left \lVert\log (A+I)- \log\left(\hat P_{\ell}\right)\right \rVert_*. \end{align*}
Using~\cite[Theorem 3.9]{PerssonKressner23} we have
\begin{equation}\label{eq:nuclear}
\mathbb{E} \left [\left \lVert \log (A+I)- \log\left(\hat P_{\ell}\right)\right \rVert_* \right]
\le 
\left(1+\frac{k}{p-1}\right)\left \lVert \log\left(\bar\Lambda_{k}+I\right)\right \rVert_*.  
\end{equation}
Let us now bound the second term of~\eqref{eq:proof1}: we have
\begin{align*}
\left \lVert \log\left(\hat M_{\ell}\right)\right \rVert_2 & \overset{\eqref{eq:equivalentMkp}}{=}  \left \lVert \log\left( \hat P^{-\frac{1}{2}}_{\ell}\left(A-\hat A_{\ell}\right) \hat P^{-\frac{1}{2}}_{\ell} + I \right) \right \rVert_2  \overset{}{=} \log\left(1+\left \lVert \hat P^{-\frac{1}{2}}_{\ell}\left(A-\hat A_{\ell}\right)\hat P^{-\frac{1}{2}}_{\ell} \right \rVert_2  \right) \\& \hspace{1.525mm}
\le \log\left (1+ \left \lVert\hat P^{-\frac{1}{2}}_{\ell} \right \rVert^2_2 \left \lVert A-\hat A_{\ell}\right \rVert_2 \right ) 
\overset{}{\le} \log\left (1+ \left \lVert A-\hat A_{\ell}\right \rVert_2 \right ), 
\end{align*}
where the last equality follows from the fact that $\hat P_\ell \succeq I$.
Therefore,
\begin{align}
    \mathbb{E}\left [\left \lVert \log\left(\hat M_{\ell}\right)\right \rVert_2 \right ] & \le \mathbb{E} \left [ \log\left(1+\left \lVert A-\hat A_{\ell}\right \rVert_2\right)\right]  \le \log\left (1+ \mathbb{E}\left[\left \lVert A-\hat A_{\ell}\right \rVert_2 \right]\right )\nonumber \\
    & \le \log \left ( 1 + \frac{2e^2(k+p)}{p^2-1} \left \lVert \bar \Lambda_k\right \rVert_* + \left(1+\frac{2k}{p-1}\right) \left \lVert \bar \Lambda_k\right \rVert_2  \right ),\label{eq:spectral}
\end{align}
where the second inequality follows from Jensen applied to the concave function $x \mapsto \log(1+x)$ and the last inequality is~\cite[Proposition 2.2]{FrangellaTroppUdell23}. Plugging~\eqref{eq:nuclear} and~\eqref{eq:spectral} into~\eqref{eq:proof1} concludes the proof.  
\end{proof}

Lemma~\ref{lemma:NysMk_fro} together with~\eqref{eq:experr1s_Nystrom} immediately implies the following bound for the~\eqref{eq:onesample-nystrom} strategy.
\begin{theorem} \label{th:err_practical}
Let $A \in \mathbb{R}^{n \times n}$ be a SPSD matrix, let $w \sim \mathcal{N}_{0,I}$, let $k\ge0$ and $p \ge 2$ such that $k+p=\ell$, with $\ell \ge 4$. Then
\begin{equation} \label{eq:exp_logdet_true_fro}\mathbb{E} \left [\mathrm{err}_{\mathrm{1S_{Nys}}}(A,\ell,w)\right]^2
\le 2 \left( 1+ \frac{k}{p-1} \right) \log\left( 1+ \frac{2e^2(k+p)}{p^2-1}\left \lVert \bar\Lambda_k \right \rVert_* + \left(1+\frac{2k}{p-1}\right)\left \lVert \bar\Lambda_k \right \rVert_2 \right)  \left \lVert \log(\bar\Lambda_k+I) \right \rVert_*,
\end{equation} 
where $\bar \Lambda_k \in \RR^{(n-k)\times (n-k)}$ is the diagonal matrix that contains the trailing $n-k$ eigenvalues of $A$. 
\end{theorem}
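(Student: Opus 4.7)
The theorem is essentially a corollary assembled from two pieces already in the section, so the plan is short. First I would invoke~\eqref{eq:experr1s_Nystrom}, which was derived by conditioning on the Nystr\"om sketching matrix $\Omega$ and using the law of total expectation: since, conditional on $\Omega$, the vector $w\sim\mathcal N_{0,I}$ is the only remaining source of randomness and $\hat M_\ell$ is symmetric, the Girard-Hutchinson variance identity gives
\[
\mathbb E_w\!\left[\mathrm{err}^2_{\mathrm{1S_{Nys}}}(A,\ell,w)\,\middle|\,\Omega\right] \;=\; 2\,\bigl\lVert \log(\hat M_\ell)\bigr\rVert_F^2 .
\]
Taking the outer expectation, applying Jensen (or equivalently Cauchy--Schwarz in the form $\mathbb E[X]\le (\mathbb E[X^2])^{1/2}$) on the map $\Omega\mapsto \mathbb E_w[\mathrm{err}^2\mid\Omega]^{1/2}$, and squaring both sides yields
\[
\mathbb E\!\left[\mathrm{err}_{\mathrm{1S_{Nys}}}(A,\ell,w)\right]^2 \;\le\; 2\,\mathbb E_\Omega\!\left[\bigl\lVert \log(\hat M_\ell)\bigr\rVert_F\right]^2 .
\]

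Second, I would apply Lemma~\ref{lemma:NysMk_fro} directly to the right-hand side, which bounds $\mathbb E_\Omega[\lVert\log(\hat M_\ell)\rVert_F]^2$ by precisely the expression that, multiplied by $2$, appears in~\eqref{eq:exp_logdet_true_fro}. Combining the two displays gives the stated bound and finishes the proof.

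There is no genuine obstacle at the level of the theorem itself: all the analytical work has been concentrated in Lemma~\ref{lemma:NysMk_fro}, whose proof is the substantive step (bringing together $\|H\|_F\le\|H\|_*^{1/2}\|H\|_2^{1/2}$, Cauchy--Schwarz in $\Omega$, the expected-nuclear-norm Nystr\"om estimate from~\cite{PerssonKressner23}, the spectral-norm Nystr\"om estimate from~\cite{FrangellaTroppUdell23}, and Jensen on $x\mapsto\log(1+x)$). The only care needed at the theorem level is the double application of Jensen/Cauchy--Schwarz: once to swap the square root with $\mathbb E_\Omega$, and once inside the proof of Lemma~\ref{lemma:NysMk_fro} to decouple the nuclear and spectral norms; both are valid because $\log(\hat M_\ell)\succeq 0$ by~\eqref{eq:logMl}, so the norms are well-behaved and nonnegative.
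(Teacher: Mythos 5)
Your proposal is correct and matches the paper's own argument exactly: the paper also obtains the theorem as an immediate consequence of the bound $\mathbb{E}\left[\mathrm{err}_{\mathrm{1S_{Nys}}}(A,\ell,w)\right]^2 \le 2\,\mathbb{E}_{\Omega}\left[\lVert \log(\hat M_{\ell})\rVert_F\right]^2$ from~\eqref{eq:experr1s_Nystrom} combined with Lemma~\ref{lemma:NysMk_fro}. Your remarks on where the real work lies (in the lemma) and on the validity of the two Jensen/Cauchy--Schwarz steps are accurate.
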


\subsection{Comparison with low-rank and \texorpdfstring{$\alpha$}{alpha}-rank}\label{subsec:analysisOther}

We can re-do the same computation in the case of the~\eqref{eq:mixed_strategy-nystrom} strategy of parameter $\alpha$, that uses $\alpha\ell$ matvecs for the Nyström approximation and the remaining $\frac{(1-\alpha)\ell + m}{m}$ matvecs for the Girard-Hutchinson estimator. This choice of parameters is to enforce a total budget of matvecs of $\ell+m$, in order to make a fair comparison with our one-sample strategy. 
The error of the~\eqref{eq:mixed_strategy-nystrom} strategy is given by
\begin{align*} \label{eq:errmix_Nystrom}
\mathrm{err}_{\mathrm{\alpha R_{Nys}}}(A,\ell,m,\alpha,w_I) :&= 
\left \lvert \trace\log\left(\hat M_{\alpha \ell}\right) - \frac{m}{(1-\alpha)\ell+m} \sum_{i=1}^{\frac{(1-\alpha)\ell+m}{m}} w_i^T \left(\log\left(\hat M_{\alpha \ell}\right)\right) w_i \right \rvert. 
\end{align*}
Following the same steps as in~\eqref{eq:experr1s_Nystrom}, we get
\begin{align} \label{eq:experrmix_Nystrom}
\mathbb{E}\left[\mathrm{err}_{\mathrm{\alpha R_{Nys}}}(A,\ell,m,\alpha,w_I) \right]^2 \nonumber 
= \frac{2m}{(1-\alpha)\ell+m} \mathbb{E}_{\Omega} \left[\left \lVert \log\left(\hat M_{\alpha\ell}\right)\right \rVert_F\right]^2
\end{align}
and using Lemma~\ref{lemma:NysMk_fro} with $ \alpha\ell$ instead of $\ell$ we get, for any $k \ge 0$, $p \ge 2$ with $k + p =  \alpha \ell $, {and $\alpha \ell\ge4$,}
\begin{multline}\label{eq:exp_logdet_true_mix} \mathbb{E} \left [\mathrm{err}_{\mathrm{\alpha R_{Nys}}}(A,\ell,m,\alpha,w_I)\right]^2  
\\  \le \frac{2m}{(1-\alpha)\ell+m} \left( 1+ \frac{k}{p-1} \right) \log\left( 1+ \frac{2e^2(k+p)}{p^2-1}\left \lVert \bar\Lambda_k\right \rVert_* + \left(1+\frac{2k}{p-1}\right) \left \lVert \bar\Lambda_k\right \rVert_2 \right)  \left \lVert \log\left(\bar\Lambda_k+I\right) \right \rVert_*.
\end{multline}   
Finally, consider the approximation~\eqref{eq:lowrank}, whose error is
\begin{equation} \label{eq:errTrunc_Nys}
\mathrm{err}_{\mathrm{LR_{Nys}}}(A,\ell+m):=\trace \log\left(A+I\right)-\trace\log\left(\hat A_{\ell+m} +I\right).
\end{equation} 
Equation~\eqref{eq:nuclear}, which was used in the analysis of the approximation~\eqref{eq:onesample-nystrom} and is precisely~\cite[Theorem 3.9]{PerssonKressner23}, states that for any $k\ge 0$, $p\ge 2$ such that $k+p=\ell+m$, we have that
\begin{equation} \label{eq:err:Trunc_Nys_leading}
\mathbb{E} \left [\mathrm{err}_{\mathrm{LR_{Nys}}}(A,\ell+m)\right]
\le 
\left(1+\frac{k}{p-1}\right)\left \lVert \log\left(\bar\Lambda_{k}+I\right)\right \rVert_*.
\end{equation}

The bound~\eqref{eq:err:Trunc_Nys_leading} is difficult to compare directly with the bounds~\eqref{eq:exp_logdet_true_fro} and~\eqref{eq:exp_logdet_true_mix}, one reason being that they are true for any combination of $k$ and $p$ with fixed sum (and the optimal values also depend on the chosen method). 
While~\eqref{eq:experrideal-one} predicts that the squared error of the ideal one-sample strategy is proportional to $\| \log(\bar \Lambda_k + I)\|_F^2$, {introducing the Nystr\"om approximation makes the quantity $\|\log(\bar \Lambda_k+I)\|_*$ appear in the bound for the one-sample strategy~\eqref{eq:exp_logdet_true_fro}.} This makes the benefit of the one-sample strategy over the low-rank strategy less clear from a theoretical point of view.
For these reasons, we postpone a more thorough comparison to the numerical experiments in Section~\ref{sec:experiments}, where we compare all three strategies on different examples, and we also compare the theoretical bounds for the one-sample strategy~\eqref{eq:exp_logdet_true_fro} and the low-rank strategy~\eqref{eq:err:Trunc_Nys_leading}. We will show, empirically, that the upper bound~\eqref{eq:exp_logdet_true_fro} is often much smaller than~\eqref{eq:err:Trunc_Nys_leading}.

\subsection{Inexact quadratic forms with Lanczos method} \label{sec:lanczos}
In practice, we approximate quadratic forms $w^T \log(B) w$, for an SPD matrix $B \in \mathbb{R}^{n \times n}$, with a Gaussian quadrature approach that uses $m$ matvecs~\cite{GolubMeurant10,Saad94}. We run $m$ iterations of the Lanczos algorithm with starting vector $w$, resulting in a tridiagonal matrix $T_m \in \mathbb{R}^{m \times m}$; we then take the approximation
\begin{equation}\label{eq:quadrature}
w^T \log(B) w \approx \|w\|^2 e_1^T \log(T_m) e_1,
\end{equation}
where $e_1 \in \mathbb{R}^m$ is the first vector of the canonical basis. The leading cost is the computation of $m$ matvecs with $B$. Note that, if $B = \hat M_\ell$ or $B = A + I$, this amounts to $m$ matvecs with $A$. The error of the approximation~\eqref{eq:quadrature} can be bounded using exactness properties of Gauss quadrature rules for polynomials combined with polynomial approximation results for the logarithm. In particular, it was shown in~\cite[Corollary 4]{CortinovisKressner21} that
\begin{equation}\label{eq:errorlanczos}
\text{Lanc}(B, m,w) := \left \lvert w^T\log(B)w-\|w\|^2 e_1^T\log(T_m)e_1\right \rvert\leq c_B \|w\|^2 \Bigg{(}\frac{\sqrt{\kappa(B)+1}-1}{\sqrt{\kappa(B)+1}+1}\Bigg{)}^{2m},
\end{equation}
where $c_B = 2\Big{(} \sqrt{\kappa(B)+1}+1 \Big{)} \log(2\kappa(B))$ and $\kappa(B)$ denotes the condition number of $B$ with respect to the spectral norm. 

If we were to use~\eqref{eq:errorlanczos} to analyze our one-sample strategy, we would run into the issue that, when $w \sim \mathcal{N}_{0,I}$, we typically have $\|w\| \approx \sqrt{ n}$. {Therefore,~\eqref{eq:errorlanczos} only predicts a small error for values of $m$ which are proportional to $\log(n)$, even in the presence of a well conditioned matrix $B$. However, in practice, we observed that a small constant value of $m$, e.g. $m=10$,} is sufficient to get an approximation error which is small with respect to the overall error of the one-sample strategy; see Section~\ref{sec:experiments} below.

\section{Error control and a ``detective'' version}
\label{sec:detective_strategy}
While the method~\eqref{eq:onesample-nystrom} is in general very effective in approximating the regularized log-determinant of an SPSD matrix $A$, there are situations in which its error~\eqref{eq:err1S_Nystrom} can be quite large, or suboptimal with respect to other strategies: when $A$ has a very slow singular value decay, or when $\ell$, the budget of matvecs allocated to the Nystr\"om approximation of $A$, is much larger than the numerical rank of $A$.

We propose to overcome the issue by checking, numerically, whether a Nystr\"om approximation of high rank is really advantageous over one of smaller rank. More specifically, assume that we have a budget of $\ell + m$ matvecs. For a fixed parameter $\beta \in (0,1)$, assuming for simplicity that $\beta \ell$ and $\beta^2 \ell$ are integers, we compare the rank-$ \beta \ell $ Nystr\"om approximation of $A$ with the one of rank-$\beta^2 \ell$. If we are satisfied with the improvement of the quality of the low-rank approximation from rank $\beta^2\ell$ to rank $\beta \ell$, we apply the one-sample strategy: we enlarge the Nystr\"om approximation using a total of $\ell$ matvecs and then use SLQ with one random Gaussian vector using $m$ Lanczos steps.
Otherwise, we apply the~\eqref{eq:mixed_strategy-nystrom} strategy with $\alpha=\beta$.

In the rest of this section, we detail how to check the Nystr\"om approximation error, we propose a heuristic method to check whether the one-sample strategy is appropriate, and we explain how to update the Nystr\"om approximation. 
The procedure is summarized in Algorithm~\ref{alg:logdetective}.

\begin{algorithm} \caption{log-det-ective} \label{alg:logdetective}
\begin{algorithmic}[1]
  \Statex \textbf{Input:} SPSD matrix $A\in\mathbb{R}^{n\times n}$; total budget of matvecs $\ell+m$; parameter $\beta \in (0,1)$.
  \Statex \textbf{Output:} Approximation $\trace_{(\ell,m)}\log(A+I) \approx \trace\log(A+I)$
  \vspace{0.5pc}
  \State Sample $\Omega \in \RR^{n \times \lfloor \beta\ell\rfloor}$ with i.i.d. $\mathcal{N}_{0,1}$ entries
  \State Build the Nyström approximation $\hat{A}_{\lfloor\beta\ell\rfloor}$ by~\cite[Algorithm 3]{TroppYurtsverUdellCevher17pt2}
  \State Compute $\mathrm{err}_F(A,\beta\ell)\approx \left \lVert A-\hat{A}_{\lfloor\beta\ell\rfloor}\right \rVert _F$ and $\mathrm{err}_F\left(A,\beta^2 \ell\right)\approx \left \lVert A-\hat{A}_{\lfloor\beta^2 \ell\rfloor}\right \rVert_F$ using~\eqref{eq:leave1out} 
  \If{ inequality~\eqref{eq:practical_threshold} is true, }\Comment{\emph{Use one-sample strategy}}
    \State Sample $\Psi \in \RR^{n \times (\ell-\lfloor \beta\ell\rfloor)}$ with i.i.d. $\mathcal{N}_{0,1}$ entries
    \State Update the Nystr\"om approximation, getting $\hat{A}_{\ell}$ \Comment{\emph{See subsection}~\ref{sec:nystrom_update}}
    \State Compute $t_1:=\trace\log\left(\hat{P}_\ell\right) = \trace\log\left(\hat A_\ell + I\right)$
    \State Use stochastic Lanczos quadrature to compute $t_2 := \text{Lanczos}(\log,A,\hat{P}_\ell,w,m)$ for $w \sim \mathcal N_{0,I}$
  \Else\Comment{\emph{Use strategy~\eqref{eq:mixed_strategy-nystrom}}}
    \State Compute $t_1:=\trace\log\left(\hat{P}_{\lfloor\beta\ell\rfloor}\right)$
    \State Set $N := \lfloor \frac{\ell+m-\lfloor \beta \ell \rfloor}{m} \rfloor$ and sample $N$ vectors $w_i \sim \mathcal{N}_{0,I}$
    \State Use stochastic Lanczos quadrature to compute $t_2:=\frac{1}{N}
      \sum_{i=1}^N\text{Lanczos}
      (\log,A,\hat P_{\lfloor\beta\ell\rfloor},w_i,m)$
  \EndIf
  \Statex \textbf{Return} \(
  \trace_{(\ell,m)}\log(A+I) := t_1 + t_2. \)
\end{algorithmic}
\end{algorithm}

\subsection{Evaluating the error of the Nystr\"om approximation}
We compute an approximation of the Frobenius norm of the error produced by the Nyström approximation using the leave-one-out estimator developed in~\cite{EpperlyTropp24}, which is
\begin{equation}\label{eq:leave1out}
\mathrm{err}^2_F\left(A,\beta\ell\right) := \frac{1}{\beta\ell}\sum_{i=1}^{\beta\ell}\left \lVert (A-\hat A_{\Omega^{(i)}})\omega_i\right \rVert^2 \approx \left \lVert A-\hat{A}_{\beta\ell}\right \rVert ^2_F,
\end{equation}
where $\Omega^{(i)} \in \mathbb{R}^{n \times ( \beta\ell -1)}$ is the matrix $\Omega$ with the $i$-th column removed, {$\hat{A}_{\Omega^{(i)}}$ is the Nyström approximation that results from leaving the
$i$-th column out of $\Omega$}, and $\omega_i$ is the $i$-th column of $\Omega$. Roughly speaking, this amounts to estimating the Frobenius norm of the $( \beta\ell -1)$-Nystr\"om approximation  by multiplication with one random vector, and averaging the estimate over all $ \beta\ell $ choices of the vector which is ``left out''. Once the factors of the rank-$ \beta \ell $ Nystr\"om approximation have been computed, the expression~\eqref{eq:leave1out} can be evaluated by an additional $\mathcal{O}(\beta^3\ell^3)$ operations; see~\cite[Algorithm 4.1]{EpperlyTropp24}.

\subsection{Choice of \texorpdfstring{$\beta$}{beta}}
The parameter $\beta$ needs to be chosen small enough so that, if we detect that the Nystr\"om approximation is not helpful enough in reducing the variance, we can switch to SLQ and we still have a substantial budget of matvecs for that part. At the same time, the parameter $\beta$ needs to be large enough for the $ \beta\ell $-Nystr\"om approximation to be comparable with the $\ell$-Nystr\"om approximation. 

Choosing $\beta = 3/4$ seems to provide a good trade-off between these two goals; see Section~\ref{sec:experiments}
below. Moreover, under the simplified assumption that quadratic forms are computed exactly with $m$ matvecs (and that quantities are integers), we have
\begin{equation*}
\mathbb{E}\left [\mathrm{err}_{\mathrm{\alpha R_{Nys}}}^2\left(A,\ell,\frac{3}{4},w\right)\right ] = \frac{8m}{\ell+4m} \mathbb{E}\left [ \left \lVert \log\left(\hat M_{\frac 34 \ell}\right)\right \rVert_F^2 \right ]\leq \frac{8m}{\ell+m} \|\log(A+I)\|_F^2;
\end{equation*}
this means that, in the case in which the Nystr\"om approximation is not good, the variance of the output of Algorithm~\ref{alg:logdetective} with $\beta = 3/4$ is at most $4$ times worse than plain stochastic Lanczos quadrature~\eqref{eq:gh} applied to $A$.

\subsection{Condition for switching the strategy}
We would like to compare, a priori, the one-sample strategy~\eqref{eq:onesample-nystrom} with budget $\ell+m$ and the mixed strategy~\eqref{eq:mixed_strategy-nystrom} with the same budget and $ \frac{ (1-\beta)\ell + m}{m}  $ vectors for the SLQ part. Even in the ideal case in which quadratic forms can be computed exactly with $m$ matvecs, testing condition~\eqref{eq:1S_vs_mix_ideal} would require computing quantities that we do not have. Therefore, as a proxy for these, we compare, instead, the one-sample strategy with a budget of $\beta \ell$ matvecs for the low-rank approximation and $m$ matvecs for the SLQ, and the mixed strategy~\eqref{eq:mixed_strategy-nystrom} with parameter $\beta$ and the same budget of matvecs. 
Assuming for simplicity that $\beta^2\ell$ is integer, condition~\eqref{eq:1S_vs_mix_ideal} can be rewritten as
\begin{equation} \label{eq:ideal_threshold}
\frac{m}{(1-\beta)\beta\ell+m} \left \lVert \log\left(M_{\beta^2\ell}\right)\right \rVert^2_F \ge \left \lVert \log(M_{\beta\ell})\right \rVert ^2_F.  
\end{equation}
Since checking this condition is too expensive, we consider instead the approximation
\begin{equation}\label{eq:manyapprox}
    \left \lVert \log \left(M_{\beta\ell}\right)\right\rVert_F^2 = \sum_{i=\beta\ell+1}^n \log(1+\lambda_i)^2 \approx \sum_{i=\beta\ell+1}^n \lambda_i^2 \approx \left \lVert A - \hat A_{\beta\ell}\right \rVert_F^2 \approx \mathrm{err}_F^2(A, \beta\ell),
\end{equation}
where the first approximation holds whenever the eigenvalues $\lambda_i$, for $i \ge \beta\ell+1$, are sufficiently small, the second approximation makes sense as $\hat A_{\beta\ell}$ is a rank-$\beta\ell$ approximation of $A$, and the third is the estimator~\eqref{eq:leave1out}; an analogous approximation can be computed for $\| \log (M_{\beta^2\ell})\|_F^2$. In our proposed algorithm, we check the condition
\begin{equation} \label{eq:practical_threshold} \frac{m}{(1-\beta)\beta\ell+m} \mathrm{err}^2_F\left(A,\beta^2\ell\right) \geq  \mathrm{err}^2_F\left(A,\beta\ell\right); 
\end{equation}
if it is satisfied, we choose the one-sample strategy.

\begin{remark}
    Checking the condition~\eqref{eq:practical_threshold} makes it so that Algorithm~\ref{alg:logdetective} is more likely to suggest the strategy~\eqref{eq:onesample-nystrom} over the strategy~\eqref{eq:mixed_strategy-nystrom}, with respect to the (idealized) case in which we check~\eqref{eq:ideal_threshold}, because of the inequality $\log(1+x) \le x$.
\end{remark}

\subsection{Updating the Nystr\"om approximation}
\label{sec:nystrom_update}

The initial rank-$\beta\ell$ Nystr\"om approximation is computed by~\cite[Algorithm 3]{TroppYurtseverUdellCevher17} by sketching $A$ with a Gaussian matrix $\Omega \in \RR^{n \times \beta\ell}$, with a computational cost of $\beta\ell$ matrix-vector multiplications with $A$ plus $\mathcal O(n\beta^2\ell^2)$. If condition~\eqref{eq:practical_threshold} holds, and therefore we need to compute the rank-$\ell$ Nyström approximation, we sample another Gaussian matrix $\Psi\in \RR^{n \times (\ell-\beta\ell)}$ and we enlarge the sketch to compute the new Nystr\"om approximation. Note that the sketch $A\Omega$ can be reused; therefore, the additional cost consists in $\ell-\beta\ell$ matvecs with $A$ plus $\mathcal O(n \ell^2)$ operations.

\section{Numerical Experiments} \label{sec:experiments}

In this section, we show the behavior of our proposed methods on a variety of examples. All  experiments are implemented in 
MATLAB R2025b. Scripts to reproduce all figures in this paper are available at \url{https://github.com/dantoni2000/log-det-ective}. Section~\ref{subsec:matrices} describes the matrices that we use for most of the experiments; Section~\ref{subsec:bounds} contains a comparison of the actual behavior of the one-sample and low-rank strategy with the corresponding bounds; Section~\ref{subsec:expsLanczos} analyzes the convergence of the Lanczos method for computing quadratic forms; Section~\ref{subsec:expsbetas} addresses different choices of $\beta$ in Algorithm~\ref{alg:logdetective}. Section~\ref{subsec:methods} compares Algorithm~\ref{alg:logdetective} and the one-sample strategy~\eqref{eq:onesample-nystrom} with other existing algorithms for approximating the log-determinant, and Section~\ref{subsec:large_examples} does the same for kernel matrices of increasing size.

\subsection{Example matrices}\label{subsec:matrices}
Usually, in applications one is interested in computing a \emph{regularized} log-determinant $\log\det(H + \mu I)$, for a suitable parameter $\mu > 0$. This problem can be reduced to our setting~\eqref{eq:logdet} by the relation
\begin{equation*}
    \log(H + \mu I ) = \log\mu \cdot I + \log(A + I), \qquad A := \frac{1}{\mu} H.
\end{equation*}


In this section, we will describe how to construct the matrix $H$, what value we choose for $\mu$, and then we will run all algorithms on the corresponding matrix $A$. Unless otherwise indicated, our matrices have size $n = 4000$.

\paragraph{Synthetic Matrices} \label{subsec:syntm}
The first matrix is inspired by~\cite[Equation (19)]{PerssonKressner23} and has algebraic spectral decay: we set
\begin{equation} \label{eq:example1}
\tag{Example Alg}
H_{\text{alg}}:=U \Lambda_{\text{alg}} U^T, \quad (\Lambda_{\text{alg}})_{i,i}=i^{-2},   
\end{equation}
for some orthonormal matrix $U \in \mathbb{R}^{n \times n}$, and we set $\mu = 10^{-2}$. We expect that this toy problem will be simple to tackle, in fact the matrix $A+I$ is well conditioned and it has a consistent spectral decay. 
The second synthetic matrix has a geometric spectral decay:
\begin{equation} \label{eq:example2}
\tag{Example Geom}
H_{\text{geom}}:=U \Lambda_{\text{geom}} U^T, \quad (\Lambda_{\text{geom}})_{i,i}=e^{-0.1 i
}, \quad \mu = 10^{-4}.
\end{equation}
The last synthetic matrix is built in such a way that we have control over all the gaps and all the clusters of the eigenvalues of the matrix $H$. It is inspired by~\cite{SorensenEmbree16}, generalized with several gaps as
\begin{equation} \label{eq:example3}
\tag{Example Gaps}
H_{\text{gap}} := \sum_{j=1}^{k} \gamma_j\, x_j x_j^T,
\end{equation}
where $k=4000$ and each $x_j$ is a random sparse vector with density $\rho = 10^{-2}$.  
The coefficients $b_j$ are:
\[
\gamma_j =
\begin{cases}
10^2/j^2, & j \in \{1,\ldots,200\}\\
1/j^2, & j \in \{201,\ldots,400\}\\
10^{-2}/j^2, & j \in \{401,\ldots,600\}\\
10^{-6}/j^2, & j \in \{601,\ldots,4000\}
\end{cases}.
\]
The regularization parameter is chosen as $\mu=10^{-6}$; the resulting matrix is ill conditioned, with $\kappa(A+I)\approx  10^8$.

\paragraph{Kernel matrices} \label{subsec:covm}
We consider here matrices that arise from the discretization of kernel functions. The first matrix is obtained by discretizing the radial basis function (RBF) kernel
\begin{equation} \label{eq:cov_RBF}
\tag{Example RBF}
(H_{\text{RBF}})_{i,j}:= \exp\left(-\tfrac{(x_i-x_j)^2}{2\sigma^2}\right),    
\end{equation}
where $x_1,\ldots,x_{n} \sim \mathcal{N}_{0,1}$,  the denominator is $2\sigma^2=10^{-4}$, and the regularization parameter is $\mu=10^{-2}$.
The spectral decay of such a kernel is known, see e.g.~\cite{Belkin18}, and satisfies $\lambda_i(H_{\text{RBF}})=\mathcal{O}(e^{-Ci})$, for some $C>0$. 
The second and third matrices come from discretizations of the Matérn kernel~\cite{Matern60,RasmussenWilliams06}: 
\begin{equation} \label{eq:cov_Matèrn}
\tag{\text{Example Mat\'ern}}
(H^{\nu, \vartheta}_{\text{Mat}})_{i,j}:= \frac{2^{1-\nu}}{\Gamma(\nu)} \left( \frac{\sqrt{2\nu}\lvert x_i -x_j \rvert}{\vartheta}\right)^\nu K_\nu\left( \frac{\sqrt{2\nu}\lvert x_i -x_j \rvert}{\vartheta}\right),
\end{equation}
where $\Gamma$ is the Gamma function, $K_\nu$ is the modified Bessel function, $\vartheta$ is a length scale parameter, $\nu$ is a smoothing parameter and $x_1,\ldots,x_{n} \sim \mathcal{N}_{0,1}$.
The spectral decay of the Matérn kernels has been derived, e.g., in~\cite{SantinSchaback16} and satisfies $\lambda_i(H^{\nu}_{\text{RBF}})=\mathcal{O}(i^{-(2\nu+1)})$.
In the experiments, we consider $\vartheta=1$, and both $\nu=\frac{1}{2}$ with regularization parameter $\mu=10^{-2}$, see e.g.~\cite{FengKulickTang24}, and $\nu=\frac{3}{2}$, with regularization parameter $\mu=10^{-4}$.

\begin{remark}
Since Gaussian random vectors are rotationally invariant, all the methods we considered in this paper are invariant by orthogonal transformation of the matrix $A$; to make the code run faster, we actually work with diagonal matrices.
\end{remark}

Figure~\ref{fig:eigenvalues} shows the spectral decay of the six matrices we consider in the numerical experiments.

\begin{figure}[ht!]
    \centering
    \includegraphics[width=\textwidth]{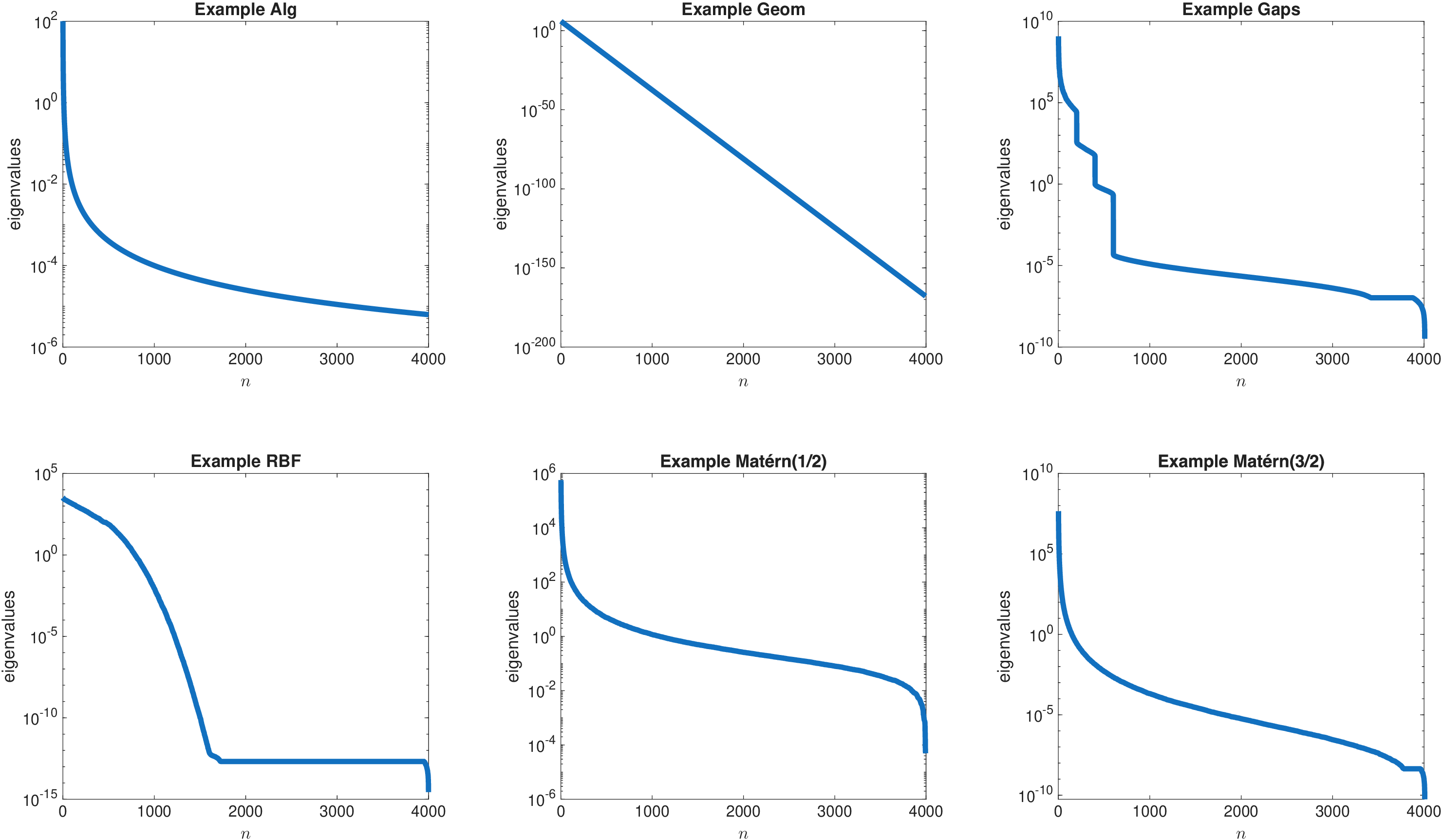}
    \caption{Spectra of the scaled matrices $A\in \RR^{4000\times 4000}$ described in Section~\ref{subsec:matrices}; note that the $y$-axis has a logarithmic scale.}
    \label{fig:eigenvalues}
\end{figure}

\subsection{Tightness of the error bounds from section~\ref{sec:nystrom}} \label{subsec:bounds}
We compare the square root of the theoretical bound~\eqref{eq:exp_logdet_true_fro} and the average error obtained, in practice, by the one-sample strategy~\eqref{eq:onesample-nystrom}, with the bound~\eqref{eq:err:Trunc_Nys_leading} and the average error of the approximation~\eqref{eq:lowrank}, for all our test matrices. 
Given a fixed budget $\ell+m$ of matvecs, the computation of the upper bounds on the error is done by taking the minimum among all the combinations of $k\ge 0$ and $p\ge 2$ with fixed sum; $k+p = \ell$ for  strategy~\eqref{eq:onesample-nystrom} and $k+p = \ell +m$ for strategy~\eqref{eq:lowrank}.

For the same fixed budget of matvecs, we compute~\eqref{eq:onesample-nystrom} and~\eqref{eq:lowrank}. Figure~\ref{fig:comparison1} and Figure~\ref{fig:comparison2} report the results with $m = 50$ and $m = 10$ Lanczos steps, respectively, for the SLQ part. The values of $\ell$ are ranging from $100$ to $1000$. 
The $x$-axis of each plot in Figures~\ref{fig:comparison1} and~\ref{fig:comparison2} represents the number of total matvecs; the dashed lines represent the theoretical bounds -- or its square root, in case of~\eqref{eq:exp_logdet_true_fro}-- divided by the target quantity $\trace \log (A+I)$; the solid lines are computed by averaging the absolute error of the corresponding approximation -- \eqref{eq:onesample-nystrom} or \eqref{eq:lowrank} -- over $100$ runs.

We emphasize that the bounds~\eqref{eq:exp_logdet_true_fro} and~\eqref{eq:err:Trunc_Nys_leading} assume that quadratic forms are computed exactly; therefore, the bound for strategy~\eqref{eq:onesample-nystrom} holds only approximately. When $m=50$, the error of the approximation of the quadratic form is always negligible in our examples, but for $m=10$ the bound~\eqref{eq:exp_logdet_true_fro} starts to hold when the Nyström preconditioner is an effective preconditioner. 
We remark that the bound~\eqref{eq:err:Trunc_Nys_leading} for the~\eqref{eq:lowrank} strategy is slightly tighter than the bound~\eqref{eq:exp_logdet_true_fro} for the~\eqref{eq:onesample-nystrom} strategy; however, they correctly predict when one strategy is better than the other. 
\begin{remark}
The Nyström approximation is implemented as described~\cite{LiLindermanSzlamStantonKlugerTygert17, TroppYurtseverUdellCevher17}; in particular, it is necessary to introduce, for numerical stability, a little shift of the order of $\varepsilon \|A\|_2$. This shift does not allow the accuracy of the approximation to be below this fixed threshold, as clearly visible in~\eqref{eq:example2}.    
\end{remark}

\begin{figure}[ht!]
    \centering
    \includegraphics[scale=.25]{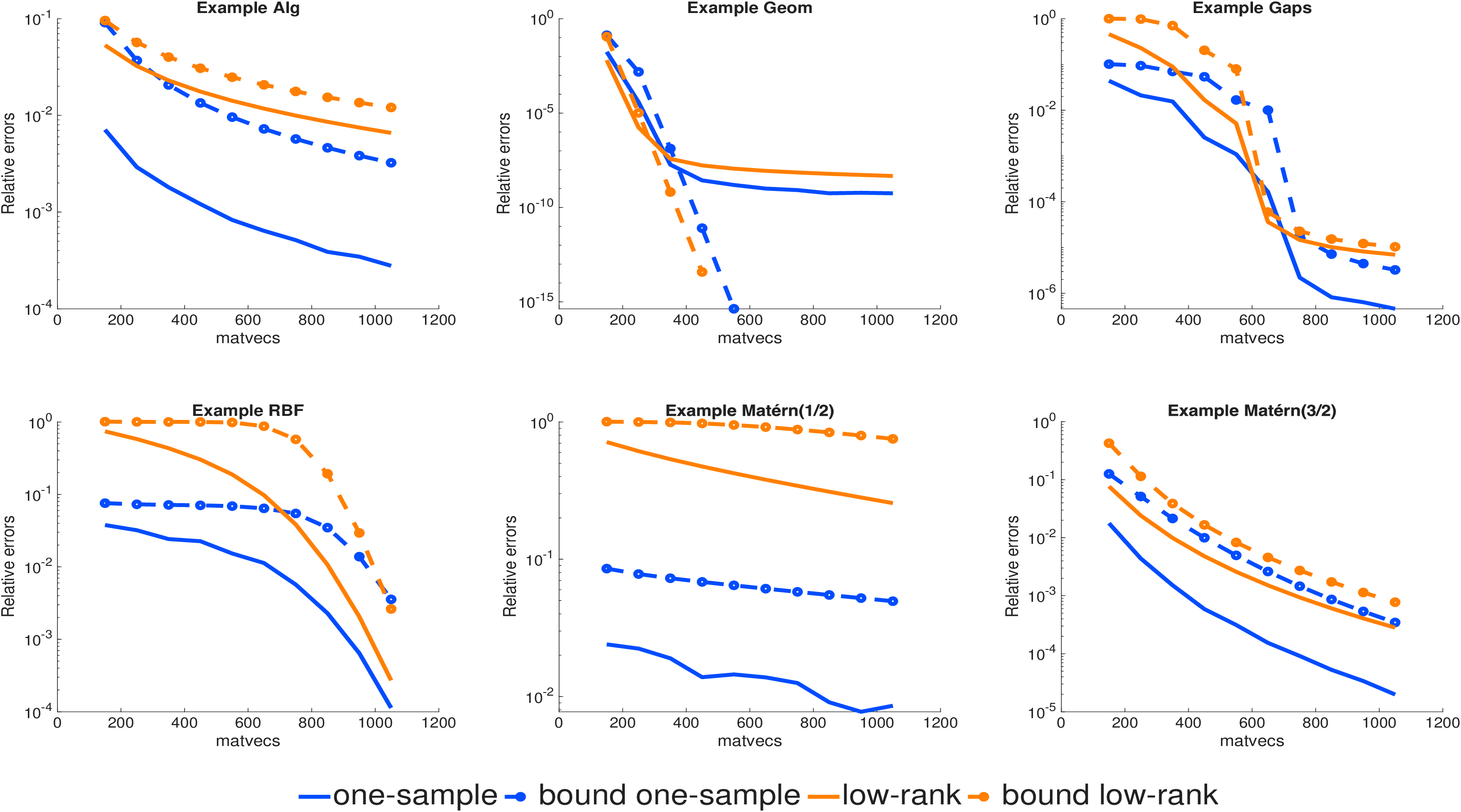}
    \caption{Square root of the upper bound~\eqref{eq:exp_logdet_true_fro} and computed error, on average, for the one-sample strategy~\eqref{eq:onesample-nystrom}, upper bound~\eqref{eq:err:Trunc_Nys_leading} and computed error, on average, for the low-rank strategy~\eqref{eq:lowrank}, considering a Nyström approximation of dimension $\ell=100,\ldots,1000$ and $m=50$ steps of the Lanczos method. Each bound is the optimal among all the combination of $k$ and $p$ with fixed sum, as described in Section~\ref{subsec:bounds}.}
    \label{fig:comparison1}
\end{figure}
\begin{figure}[ht!]
    \centering
\includegraphics[scale=.25]{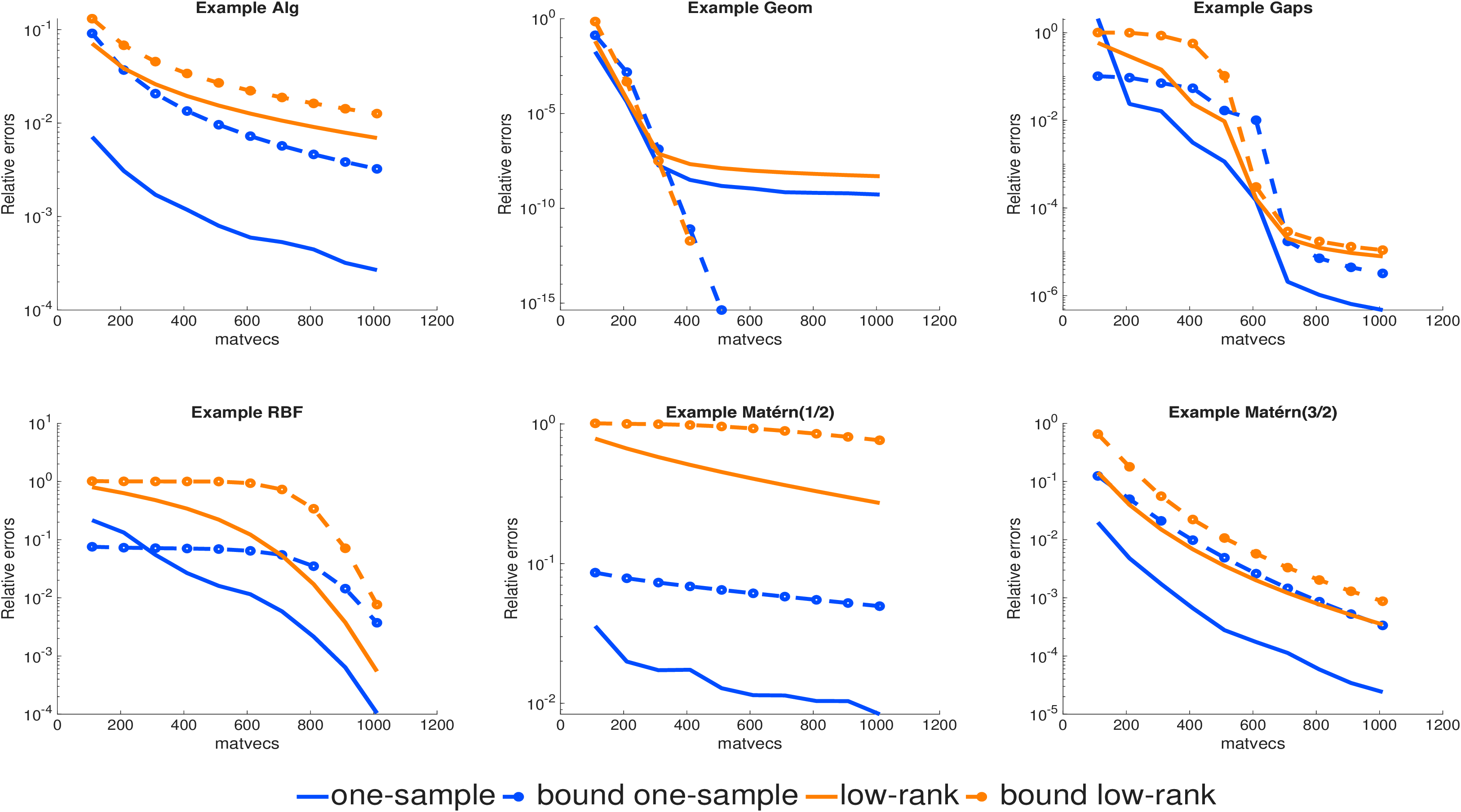}
    \caption{Square root of the upper bound~\eqref{eq:exp_logdet_true_fro} and computed error, on average, for the one-sample strategy~\eqref{eq:onesample-nystrom}, upper bound~\eqref{eq:err:Trunc_Nys_leading} and computed error, on average, for the low-rank strategy~\eqref{eq:lowrank}, considering a Nyström approximation of dimension $\ell=100,\ldots,1000$ and $m=10$ steps of the Lanczos method. Each bound is the optimal among all the combination of $k$ and $p$ with fixed sum, as described in Section~\ref{subsec:bounds}.}
    \label{fig:comparison2}
\end{figure}

\subsection{Convergence of the Lanczos method for quadratic forms with the logarithm} 
\label{subsec:expsLanczos}

Let us illustrate how the errors made in the computation of the quadratic forms influence the overall accuracy of the approximation. We consider, for each of our example matrices, the Lanczos errors 
\begin{equation}\label{eq:lanczos}
\text{Lanc}\left (\hat M_\ell,m,w\right ) = \left \lvert w^T\log\left(\hat M_\ell\right)w-\|w\|^2 e_1^T\log(T_m)e_1\right \rvert,
\end{equation}
defined in~\eqref{eq:errorlanczos}, for $m \in \{10, 50\}$ and values of $\ell$ ranging from $100$ to $1000$; in the plot, the quantity~\eqref{eq:lanczos} is divided by $\trace \log (A+I)$. In Figure~\ref{fig:Lanczos} we fix a Nyström preconditioner $\hat P_\ell$, for each value of $\ell$, and we plot the average error over $100$ choices of the Gaussian random vector $w$ for the fixed preconditioned matrix $\hat M_\ell$. In each plot, we also show the error of the trace estimator
\begin{equation} \label{eq:baderror}
\text{tracest}_\ell(w) := \left \lvert \trace\log\left(\hat P_\ell^{-\frac{1}{2}}(A+I)\hat P_\ell^{-\frac{1}{2}}\right)- w^T \log\left(\hat P_\ell^{-\frac{1}{2}}(A+I)\hat P_\ell^{-\frac{1}{2}}\right) w \right \rvert ,   
\end{equation}
averaged over the same $100$ random Gaussian vectors $w$, and divided by the target quantity $\trace\log(A+I)$. The strategy~\eqref{eq:onesample-nystrom} has two sources of error: the Lanczos approximation of the quadratic form~\eqref{eq:lanczos} and the trace estimation error~\eqref{eq:baderror}. In Figure~\ref{fig:Lanczos} we see that, even for moderately small values of $\ell$, the Lanczos approximation with $m = 10$ is good enough in the sense that~\eqref{eq:lanczos} is negligible with respect to~\eqref{eq:baderror}, and it does not make sense to pursue a better approximation of the quadratic form. 

\begin{figure}[ht!]
    \centering \includegraphics[width=\textwidth]{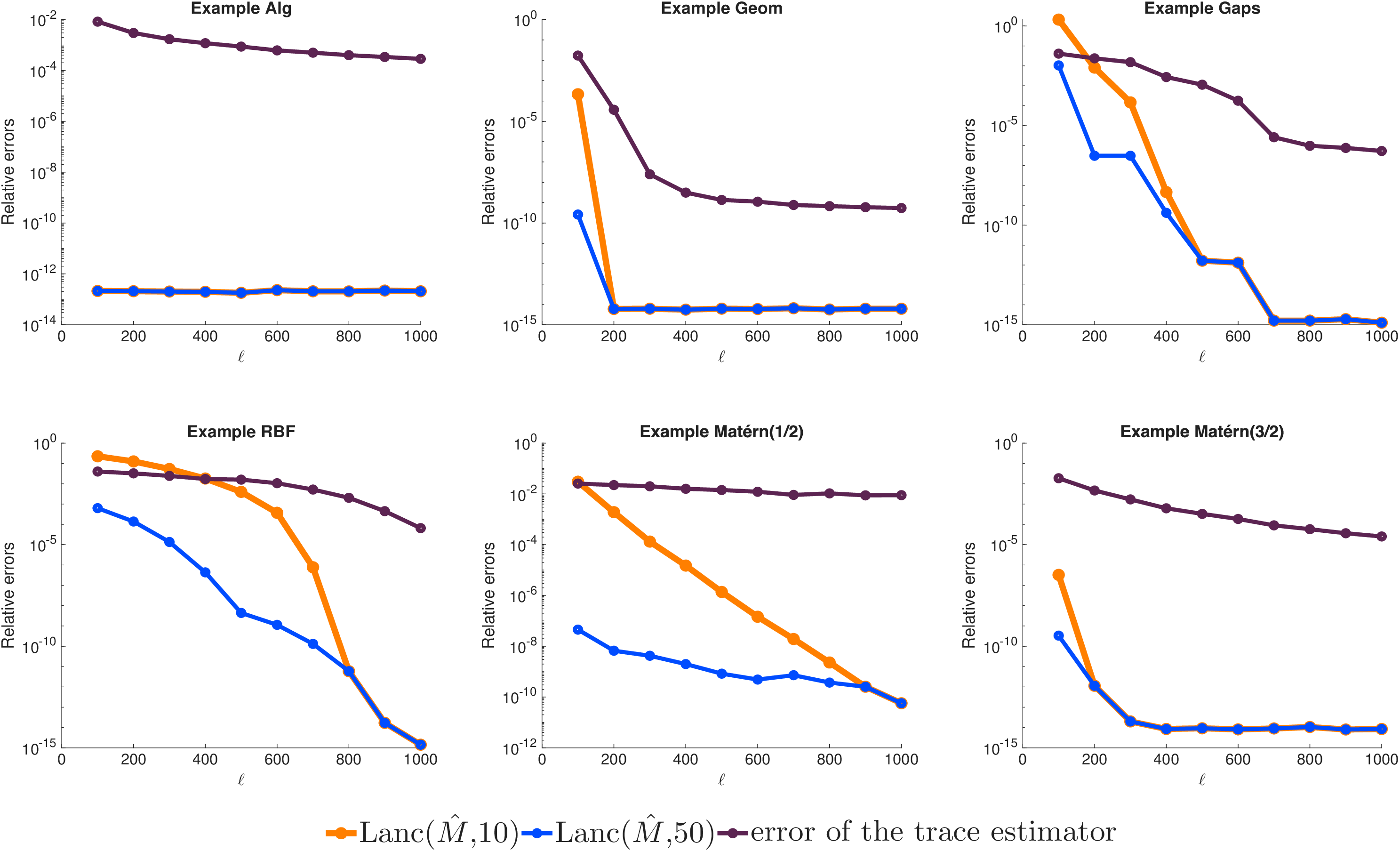}
    \caption{Plot of the error of the Lanczos method~\eqref{eq:lanczos}, averaging on $100$ Gaussian samples, divided by the target quantity $\trace\log(A+I)$ after $m=10$ and $m=50$ steps and comparison with the error of the trace estimator~\eqref{eq:baderror}, averaging on the same Gaussian samples and dividing by the target quantity $\trace\log(A+I)$, considering an approximation of dimension $\ell=100,\ldots,1000$ as described in Section~\ref{subsec:expsLanczos}}.
    \label{fig:Lanczos}
\end{figure}

\subsection{Choice of the parameter \texorpdfstring{$\beta$}{beta}} \label{subsec:expsbetas}
We show the behavior of the log-det-ective method (Algorithm~\ref{alg:logdetective}) for different values of the initial budget allocated for the low-rank approximation of the matrix, represented by the value of $\beta$, while the number of steps used to approximate any quadratic form in Lanczos method is fixed to $m=10$. In the experiments we take $\beta \in \left \{\frac 18, \frac 14, \frac 12,\frac 34, \frac 78\right \}$ and values of $\ell$ ranging from $100$ to $1000$. 
In each plot, the $x$-axis is the total number of matvecs $\ell+m$ and the $y$-axis is the final relative error in the approximation of the log-determinant. For each value of $\ell$ that we consider, we run each algorithm $100$ times; the plots report the mean relative error,
with respect to the target quantity $\trace \log(A+I)$, and the standard deviation obtained discarding the best and the worst $10\%$ errors over the runs, represented by vertical bars.
{To make the bars relative to the standard deviation visually distinguishable among the methods, some lines are slightly shifted to the left and to the right, although the budget of matvecs used for different methods is still the same.}
The plots in Figure~\ref{fig:betas} show that if we choose a value of $\beta$ that is too small, the method can fail to detect when it is convenient to switch the strategy, and this is due to the fact that the rank-$\beta \ell$-approximation is very different from the rank-$\ell$ approximation.
Specifically, in~\eqref{eq:example1}, and~\eqref{eq:cov_Matèrn} with $\nu=\frac{1}{2}$, Algorithm~\ref{alg:logdetective} with the values $\beta=\frac 18$, $\beta= \frac{1}{4}$ in~\eqref{eq:cov_RBF} wrongly applies the one-sample strategy. Besides, in~\eqref{eq:cov_RBF}, values of $\beta=\frac 18$, $\beta= \frac 14$ and $\beta=\frac 12$ wrongly predicts the mixed strategy.
From now on we fix $\beta=\frac{3}{4}$.

\begin{figure}[ht]
    \centering
    \includegraphics[width=\textwidth]{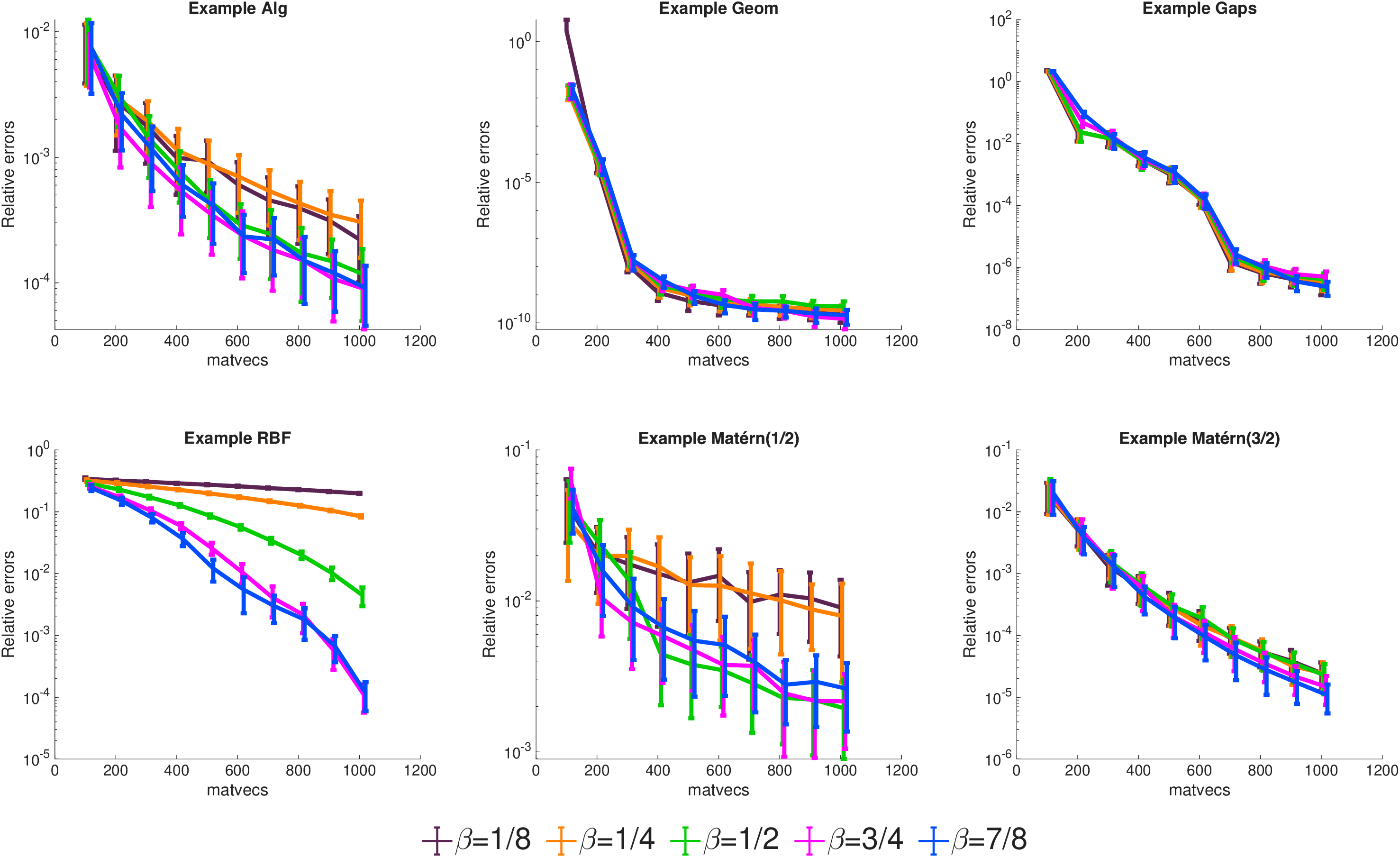}
    \caption{Comparison of the error of the log-det-ective strategy of Section~\ref{sec:detective_strategy} for $\beta \in \{\frac 18, \frac 14, \frac 12,\frac 34, \frac 78\}$, considering a Nyström approximation of dimension $\ell=100,\ldots,1000$ and $m=10$ steps of the Lanczos method, averaging on $100$ runs.}
    \label{fig:betas}
\end{figure}

\subsection{Comparison with existing strategies} \label{subsec:methods}
For a fixed budget of matvecs, we compare our one-sample strategy~\eqref{eq:onesample-nystrom} and log-det-ective strategy (Algorithm~\ref{alg:logdetective}) with several existing techniques that are tailored to the computation of traces of matrix functions:
\begin{itemize}
    \item plain stochastic Lanczos quadrature~\eqref{eq:gh};
    \item the funNys++ algorithm in~\cite{PerssonKressner23};
    \item the Krylov-aware method, corresponding to~\cite[Algorithm 3.1]{ChenHallman23};
    \item the low-rank strategy~\eqref{eq:lowrank};
    \item the strategy~\eqref{eq:mixed_strategy-nystrom} which splits the number of matvecs evenly between the creation of the preconditioner. This is called \say{\emph{half samples}} in the plots and is inspired by~\cite{WengerPleissHennigCunninghamGardner22}.
\end{itemize}
Motivated by the results in Section~\ref{subsec:expsLanczos}, we fix the number of iterations of the Lanczos method as $m=10$ in all the methods where a preconditioner (either in an implicit or explicit way) is used. In particular, for the Krylov-aware strategy~\cite{ChenHallman23} we need to set a block size $b$, a deflation parameter $q$ which represents the number of blocks used for the implicit preconditioner, and a number of samples $N$ to estimate the (implicit) residual. In our experiment, we set $b=4$, $N=1$, and $q=\max\left\{0,\left \lfloor\frac{(\ell +m) - (b+N) m}{b}\right \rfloor\right\}$, so that we use at least $\ell + m$ matvecs, which is our budget. The rationale behind this choice is the same used for our proposed strategies: we are (implicitly) preconditioning as much as we can and use few matvecs to estimate the (implicit) residual. In this sense, the Krylov-aware technique can be seen as a different preconditioning strategy. When running the algorithm, in some examples we stop before the budget reaches $1010$ matvecs because the deflated space becomes numerically rank-deficient. 

When comparing with~\eqref{eq:gh}{ and funNys++}, however, we should run the Lanczos algorithm for a potentially larger number of steps $\hat m$; we choose the smallest $\hat m $ such that
\begin{equation} \label{eq:hatm_gh}
\text{Lanc}\left (A+I,\hat m,w\right) \le \frac{m}{\ell+m}\|\log(A+I)\|_F,
\end{equation}
to make sure that the error incurred in the computation of quadratic forms is smaller, or of the same order of magnitude, as the trace estimation error. Of course, this choice can only be made for illustration purposes: in practice, computing $\|\log(A+I)\|^2_F$ is as expensive as computing the quantity $\|\log(A+I)\|_*$, in which we are interested.

As before, for each value of $\ell$ that we consider, we run each algorithm $100$ times; the plots in Figure~\ref{fig:comparison_methods} report the mean relative error, with respect to the target quantity $\trace \log(A+I)$, and the standard deviation obtained discarding the best and the worst $10\%$ errors over the runs. The $x$-axis is the total number of matvecs, the $y$-axis is the final relative error in the computation of the log-determinant and the vertical bars denote the standard deviation. 
Again for clarity purposes, the lines related to the proposed strategies are thicker, and others are slightly shifted to the left and to the right, although the budget of matvecs used for different methods is still the same.
Figure~\ref{fig:comparison_methods} confirms that the one-sample strategy~\eqref{eq:onesample-nystrom} is almost always close to optimal, and generally better than splitting the budget equally (the ``half-samples'' strategy). As expected, SLQ is slow to converge since it does not exploit any low-rank structure in the matrix $A$; on the other side, the strategy~\eqref{eq:lowrank} does not take into account the residual an also performs rather suboptimally in most examples. As the Krylov-aware algorithm is also building an implicit preconditioner and we chose the parameters specifically to mimic a large preconditioner and a small budget for the implicit residual, its performance is similar to our proposed methods for larger values of the budget, while it sometimes has a slower start. 

Finally, the log-det-ective method (Algorithm~\ref{alg:logdetective}) is very effective, and it is able to correctly identify the cases in which the one-sample strategy is suboptimal.  

\begin{figure}[ht]
    \centering
    \includegraphics[width=\textwidth]
    {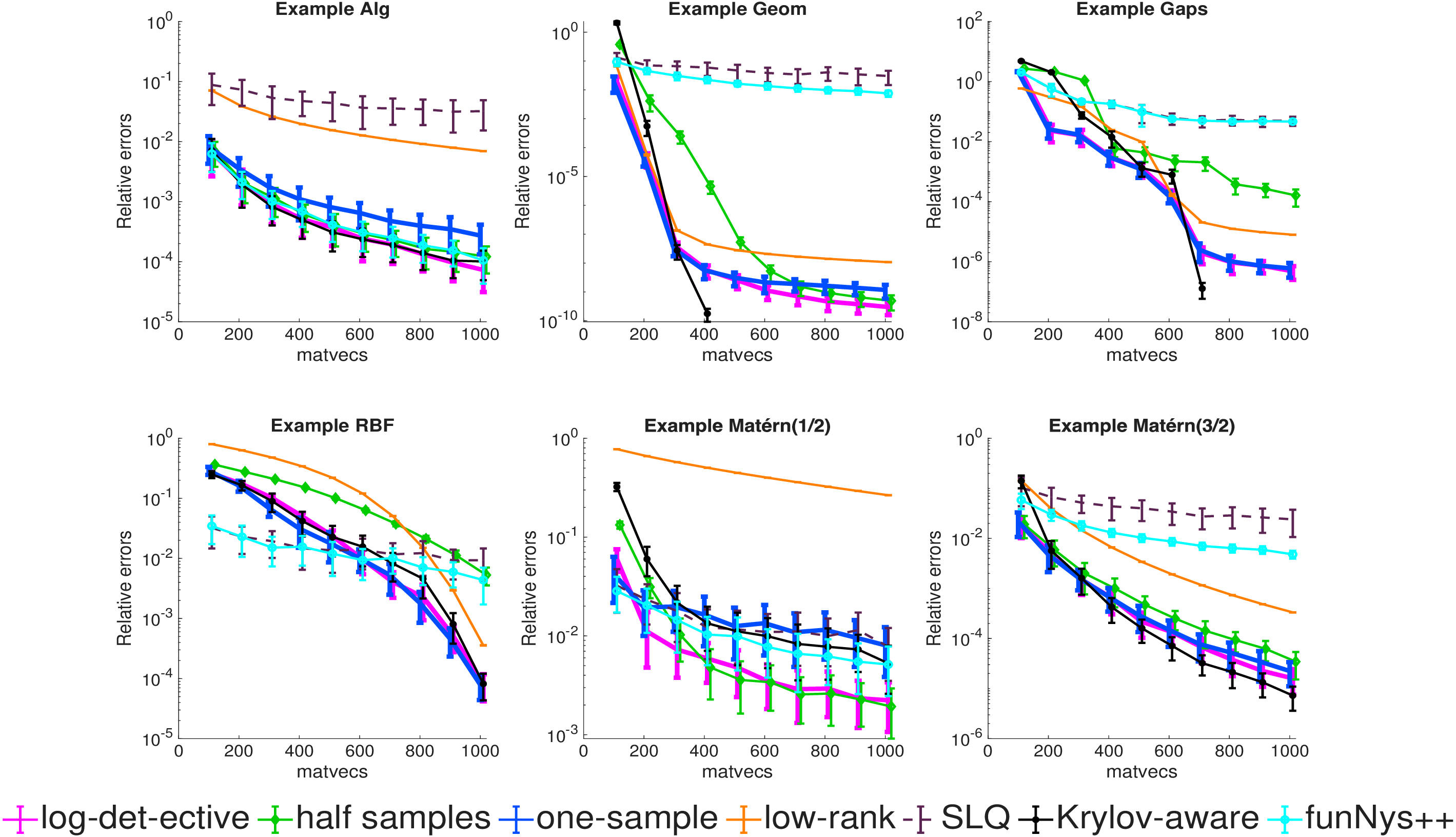}
    \caption{Comparison of the errors of the strategies listed in Section~\ref{subsec:methods}, considering a Nyström approximation of dimension $\ell=100,\ldots,1000$ and $m=10$ steps of the Lanczos method for the methods involving a preconditioner and number of steps $\hat m$ that satisfies~\eqref{eq:hatm_gh} for the strategy~\eqref{eq:gh} and funNys++, averaging on $100$ runs.} \label{fig:comparison_methods}
\end{figure}

\subsection{Comparison for matrices of increasing sizes}\label{subsec:large_examples}
We compare the behavior of our one-sample strategy~\eqref{eq:onesample-nystrom} and log-det-ective method (Algorithm~\ref{alg:logdetective})  with the \say{half samples} strategy recalled in Section~\ref{subsec:methods} and the low-rank strategy~\eqref{eq:lowrank} considering kernel matrices of increasing size $n$ ranging from $1000$ to $10000$ and a budget of total matvecs for the approximation of the log-determinant equal to $\frac{n}{10}+m$, for $m=20$.
We consider a $d$-dimensional RBF and a $d$-dimensional Matérn kernel, defined by the expressions~\eqref{eq:cov_RBF} and~\eqref{eq:cov_Matèrn} respectively, in which we use $d$-dimensional Gaussian vectors $X_i \sim \mathcal{N}_{0,I_d}$ instead of $x_i \sim \mathcal{N}_{0,1}$, and the squared Euclidean norm of $X_i - X_j$ takes the place of the square of $x_i - x_j$. 
We take a $5$-dimensional RBF kernel with $\sigma^2 = 5/2$ and $\mu = 10^{-2}$ and a $3$-dimensional Matérn kernel with $\vartheta=1$, $\nu = \frac{5}{2}$ and $\mu = 10^{-2}$.
For each value of $n$ that we consider, we run each algorithm $100$ times; the plots in Figure~\ref{fig:big_methods} report the mean relative error, with respect to the target quantity $\trace \log(A+I)$, and the standard deviation obtained discarding the best and the worst $10\%$ errors over the runs. The $x$-axis is the size of the kernel matrix, the $y$-axis is the final relative error in the computation of the log-determinant and the vertical bars denote the standard deviation. 
Figure~\ref{fig:big_methods} shows the effectiveness of the proposed methods when increasing the dimension of the problem. 
\begin{figure}
    \centering
    \includegraphics[scale=.35]{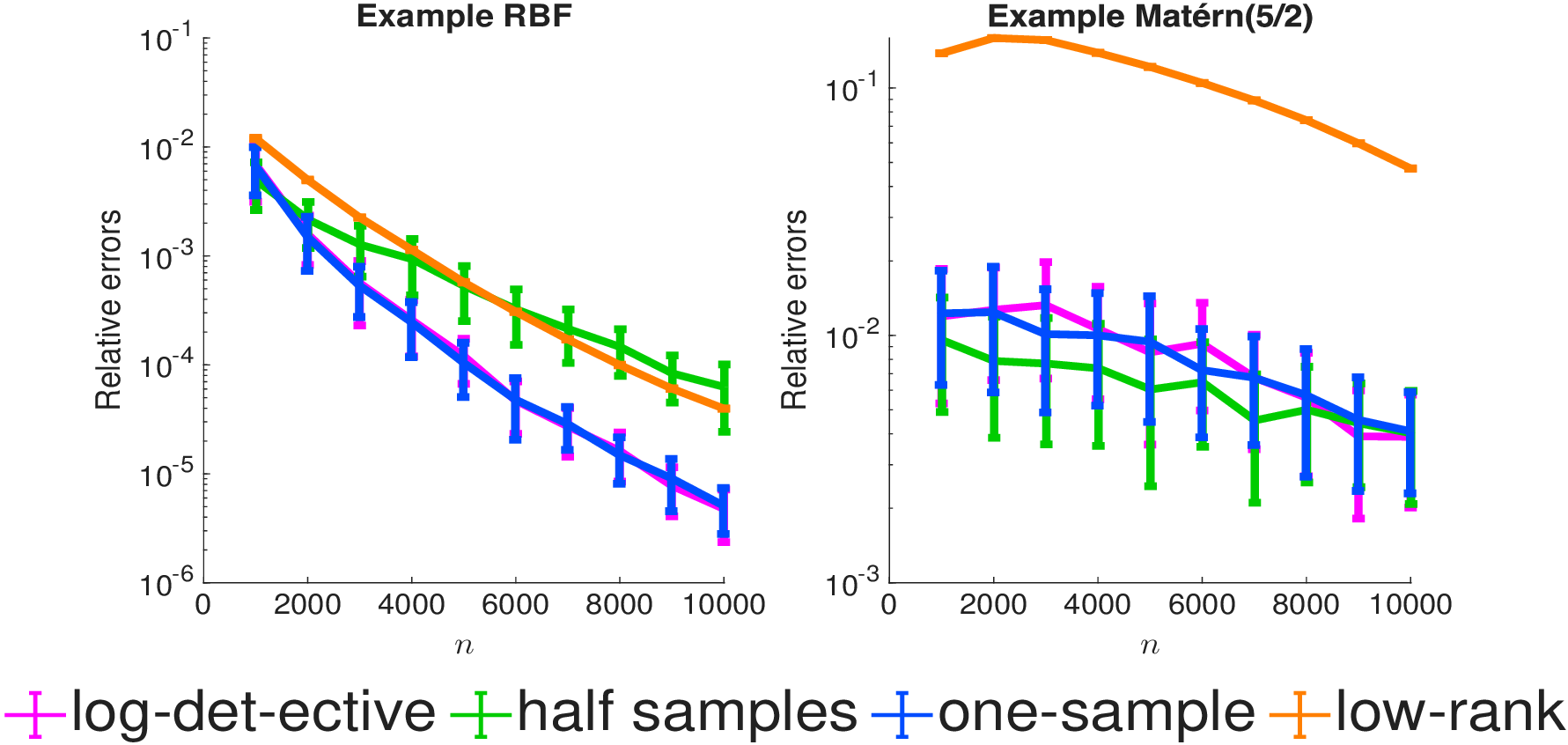}
    \caption{Comparison of the errors of the strategies listed in Section~\ref{subsec:large_examples}, considering kernel matrices of increasing size $n=1000,\ldots,10000$, a budget of matvecs for the approximation of the log-determinant equal to $n/10+m$ and $m=20$ steps of the Lanczos method, averaging on $100$ runs.} \label{fig:big_methods}
\end{figure}
\section{Conclusions}

We proposed two variance reduction methods to approximate the regularized log-determinant of an SPSD matrix $A$, which combine preconditioning and the use of few samples for estimating the residual term via SLQ. We used a generalized Nystr\"om preconditioner, which is cheap to compute and effective for matrices with some spectral decay. We theoretically analyzed the method in the case when we pick one single sample for SLQ, and we tested this strategy on matrices exhibiting a variety of eigenvalue decays, showing numerically that this choice is often the optimal one. 
{For matrices with moderate spectral decay, we observe that, although suboptimal, it still comparable to the best methods, and thus it is a safe simple choice.   
To automatically detect the suboptimal situations, we 
proposed a \say{log-det-ective} strategy (Algorithm~\ref{alg:logdetective}) which, in these cases,} changes the allocation of the budget of matvecs, allowing more vectors for the SLQ part.
Several numerical experiments justify the choice of parameters for the \say{log-det-ective} method and confirmed the effectiveness of both strategies.

From a practical point of view, some technical modifications can be made: e.g., choosing a different random embedding matrix for the Nyström approximation, choosing other preconditioners, or taking differently distributed random vectors for the trace estimation part, for instance Rademacher vectors or uniform random vectors on a sphere. 

\subsection*{Acknowledgments} We thank Michele Benzi, Stefano Massei, and Valeria Simoncini for helpful discussions on topics related to this work. Both authors are members of INdAM / GNCS.


\bibliographystyle{plain}
\bibliography{matscinet_biblio}

@book {Higham08,
    AUTHOR = {Higham, Nicholas J.},
     TITLE = {Functions of matrices},
      NOTE = {Theory and computation},
 PUBLISHER = {Society for Industrial and Applied Mathematics (SIAM),
              Philadelphia, PA},
      YEAR = {2008},
     PAGES = {xx+425},
      ISBN = {978-0-89871-646-7},
   MRCLASS = {15-02 (93B40 93C05)},
  MRNUMBER = {2396439},
MRREVIEWER = {Daniel\ Kressner},
       DOI = {10.1137/1.9780898717778},
       URL = {https://doi.org/10.1137/1.9780898717778},
}

@article {PerssonKressner23,
    AUTHOR = {Persson, David and Kressner, Daniel},
     TITLE = {Randomized low-rank approximation of monotone matrix
              functions},
   JOURNAL = {SIAM J. Matrix Anal. Appl.},
  FJOURNAL = {SIAM Journal on Matrix Analysis and Applications},
    VOLUME = {44},
      YEAR = {2023},
    NUMBER = {2},
     PAGES = {894--918},
      ISSN = {0895-4798,1095-7162},
   MRCLASS = {65F55},
  MRNUMBER = {4598827},
       DOI = {10.1137/22M1523923},
       URL = {https://doi.org/10.1137/22M1523923},
}

@book {GolubMeurant10,
    AUTHOR = {Golub, Gene H. and Meurant, G\'erard},
     TITLE = {Matrices, moments and quadrature with applications},
    SERIES = {Princeton Series in Applied Mathematics},
 PUBLISHER = {Princeton University Press, Princeton, NJ},
      YEAR = {2010},
     PAGES = {xii+363},
      ISBN = {978-0-691-14341-5},
   MRCLASS = {65-02 (65D30)},
  MRNUMBER = {2582949},
MRREVIEWER = {G.\ A.\ Evans},
}

@article {UbaruChenSaad17,
    AUTHOR = {Ubaru, Shashanka and Chen, Jie and Saad, Yousef},
     TITLE = {Fast estimation of {$\text{\tt tr}(f(A))$} via stochastic
              {L}anczos quadrature},
   JOURNAL = {SIAM J. Matrix Anal. Appl.},
  FJOURNAL = {SIAM Journal on Matrix Analysis and Applications},
    VOLUME = {38},
      YEAR = {2017},
    NUMBER = {4},
     PAGES = {1075--1099},
      ISSN = {0895-4798,1095-7162},
   MRCLASS = {65F60 (65C05 65D30 65F15 65F30 65Y20)},
  MRNUMBER = {3707895},
MRREVIEWER = {Elena\ Pelican},
       DOI = {10.1137/16M1104974},
       URL = {https://doi.org/10.1137/16M1104974},
}

@article {SaibabaAlexanderianIpsen17,
    AUTHOR = {Saibaba, Arvind K. and Alexanderian, Alen and Ipsen, Ilse C.
              F.},
     TITLE = {Randomized matrix-free trace and log-determinant estimators},
   JOURNAL = {Numer. Math.},
  FJOURNAL = {Numerische Mathematik},
    VOLUME = {137},
      YEAR = {2017},
    NUMBER = {2},
     PAGES = {353--395},
      ISSN = {0029-599X,0945-3245},
   MRCLASS = {65F40 (15B52 62F15 65F15 65F25 65F35 68W20)},
  MRNUMBER = {3696084},
       DOI = {10.1007/s00211-017-0880-z},
       URL = {https://doi.org/10.1007/s00211-017-0880-z},
}

@article {PerssonMeyerMusco25,
    AUTHOR = {Persson, David and Meyer, Raphael A. and Musco, Christopher},
     TITLE = {Algorithm-agnostic low-rank approximation of operator monotone
              matrix functions},
   JOURNAL = {SIAM J. Matrix Anal. Appl.},
  FJOURNAL = {SIAM Journal on Matrix Analysis and Applications},
    VOLUME = {46},
      YEAR = {2025},
    NUMBER = {1},
     PAGES = {1--21},
      ISSN = {0895-4798,1095-7162},
   MRCLASS = {65F55 (65F15 65F60)},
  MRNUMBER = {4846209},
MRREVIEWER = {Peng\ Li},
       DOI = {10.1137/23M1619435},
       URL = {https://doi.org/10.1137/23M1619435},
}

@article {Hutchinson90,
    AUTHOR = {Hutchinson, M. F.},
     TITLE = {A stochastic estimator of the trace of the influence matrix
              for {L}aplacian smoothing splines},
   JOURNAL = {Comm. Statist. Simulation Comput.},
  FJOURNAL = {Communications in Statistics. Simulation and Computation},
    VOLUME = {19},
      YEAR = {1990},
    NUMBER = {2},
     PAGES = {433--450},
      ISSN = {0361-0918,1532-4141},
   MRCLASS = {62H12},
  MRNUMBER = {1075456},
       DOI = {10.1080/03610919008812864},
       URL = {https://doi.org/10.1080/03610919008812864},
}

@article {Girard89,
    AUTHOR = {Girard, Didier A.},
     TITLE = {A fast ``{M}onte {C}arlo cross-validation'' procedure for
              large least squares problems with noisy data},
   JOURNAL = {Numer. Math.},
  FJOURNAL = {Numerische Mathematik},
    VOLUME = {56},
      YEAR = {1989},
    NUMBER = {1},
     PAGES = {1--23},
      ISSN = {0029-599X,0945-3245},
   MRCLASS = {65C05 (65D10 65F99)},
  MRNUMBER = {1012701},
MRREVIEWER = {Artenio\ De Matteis},
       DOI = {10.1007/BF01395775},
       URL = {https://doi.org/10.1007/BF01395775},
}

@InProceedings{WengerPleissHennigCunninghamGardner22,
  title = 	 {Preconditioning for Scalable {G}aussian Process Hyperparameter Optimization},
  author =       {Wenger, Jonathan and Pleiss, Geoff and Hennig, Philipp and Cunningham, John and Gardner, Jacob},
  booktitle = 	 {Proceedings of the 39th International Conference on Machine Learning},
  pages = 	 {23751--23780},
  year = 	 {2022},
  volume = 	 {162},
  series = 	 {Proceedings of Machine Learning Research},
  publisher =    {PMLR},
  url = 	 {https://proceedings.mlr.press/v162/wenger22a.html},
}

@article {FengKulickTang24,
    AUTHOR = {Feng, Jinchao and Kulick, Charles and Tang, Sui},
     TITLE = {Data-driven model selections of second-order particle dynamics
              via integrating {G}aussian processes with low-dimensional
              interacting structures},
   JOURNAL = {Phys. D},
  FJOURNAL = {Physica D. Nonlinear Phenomena},
    VOLUME = {461},
      YEAR = {2024},
     PAGES = {Paper No. 134097, 19},
      ISSN = {0167-2789,1872-8022},
   MRCLASS = {65K10 (37M10 82C22)},
  MRNUMBER = {4711181},
       DOI = {10.1016/j.physd.2024.134097},
       URL = {https://doi.org/10.1016/j.physd.2024.134097},
}

@article {FrangellaTroppUdell23,
    AUTHOR = {Frangella, Zachary and Tropp, Joel A. and Udell, Madeleine},
     TITLE = {Randomized {N}ystr\"om preconditioning},
   JOURNAL = {SIAM J. Matrix Anal. Appl.},
  FJOURNAL = {SIAM Journal on Matrix Analysis and Applications},
    VOLUME = {44},
      YEAR = {2023},
    NUMBER = {2},
     PAGES = {718--752},
      ISSN = {0895-4798,1095-7162},
   MRCLASS = {65F08 (65F22 65F55)},
  MRNUMBER = {4594790},
MRREVIEWER = {J\"org\ Liesen},
       DOI = {10.1137/21M1466244},
       URL = {https://doi.org/10.1137/21M1466244},
}

@book {RasmussenWilliams06,
    AUTHOR = {Rasmussen, Carl Edward and Williams, Christopher K. I.},
     TITLE = {Gaussian processes for machine learning},
    SERIES = {Adaptive Computation and Machine Learning},
 PUBLISHER = {MIT Press, Cambridge, MA},
      YEAR = {2006},
     PAGES = {xviii+248},
      ISBN = {978-0-262-18253-9},
   MRCLASS = {68T05 (60G15 62G08 62H30 93E35)},
  MRNUMBER = {2514435},
MRREVIEWER = {Wenbo\ V.\ Li},
}

@article {HalkoMartinssonTropp11,
    AUTHOR = {Halko, N. and Martinsson, P. G. and Tropp, J. A.},
     TITLE = {Finding structure with randomness: probabilistic algorithms
              for constructing approximate matrix decompositions},
   JOURNAL = {SIAM Rev.},
  FJOURNAL = {SIAM Review},
    VOLUME = {53},
      YEAR = {2011},
    NUMBER = {2},
     PAGES = {217--288},
      ISSN = {0036-1445,1095-7200},
   MRCLASS = {65F30 (60B20 68W20)},
  MRNUMBER = {2806637},
MRREVIEWER = {Thomas\ K.\ Huckle},
       DOI = {10.1137/090771806},
       URL = {https://doi.org/10.1137/090771806},
}

@article {TroppYurtseverUdellCevher17,
    AUTHOR = {Tropp, Joel A. and Yurtsever, Alp and Udell, Madeleine and
              Cevher, Volkan},
     TITLE = {Practical sketching algorithms for low-rank matrix
              approximation},
   JOURNAL = {SIAM J. Matrix Anal. Appl.},
  FJOURNAL = {SIAM Journal on Matrix Analysis and Applications},
    VOLUME = {38},
      YEAR = {2017},
    NUMBER = {4},
     PAGES = {1454--1485},
      ISSN = {0895-4798,1095-7162},
   MRCLASS = {65F30 (68W20)},
  MRNUMBER = {3732946},
MRREVIEWER = {Jos\'e\ Mas},
       DOI = {10.1137/17M1111590},
       URL = {https://doi.org/10.1137/17M1111590},
}

@article {CortinovisKressner21,
    AUTHOR = {Cortinovis, Alice and Kressner, Daniel},
     TITLE = {On randomized trace estimates for indefinite matrices with an
              application to determinants},
   JOURNAL = {Found. Comput. Math.},
  FJOURNAL = {Foundations of Computational Mathematics. The Journal of the
              Society for the Foundations of Computational Mathematics},
    VOLUME = {22},
      YEAR = {2022},
    NUMBER = {3},
     PAGES = {875--903},
      ISSN = {1615-3375,1615-3383},
   MRCLASS = {65C05 (60E15 65F40 65F60 68W20)},
  MRNUMBER = {4433116},
       DOI = {10.1007/s10208-021-09525-9},
       URL = {https://doi.org/10.1007/s10208-021-09525-9},
}

@inproceedings{GardnerPleissWeinbergerBindelWilson18,
 author = {Gardner, Jacob and Pleiss, Geoff and Weinberger, Kilian Q and Bindel, David and Wilson, Andrew G},
 booktitle = {Advances in Neural Information Processing Systems},
 editor = {S. Bengio and H. Wallach and H. Larochelle and K. Grauman and N. Cesa-Bianchi and R. Garnett},
 pages = {},
 publisher = {Curran Associates, Inc.},
 title = {GPy{T}orch: {B}lackbox {M}atrix-{M}atrix {G}aussian {P}rocess {I}nference with {GPU} {A}cceleration},
 url = {https://proceedings.neurips.cc/paper_files/paper/2018/file/27e8e17134dd7083b050476733207ea1-Paper.pdf},
 volume = {31},
 year = {2018}
}

@article{Seeger04,
author = {Seeger, Matthias},
title = {Gaussian {P}rocesses for {M}achine {L}earning},
journal = {International Journal of Neural Systems},
volume = {14},
number = {02},
pages = {69-106},
year = {2004},
doi = {10.1142/S0129065704001899},
URL = { 
        https://doi.org/10.1142/S0129065704001899
},
eprint = { 
        https://doi.org/10.1142/S0129065704001899
}
}

@book{KuleszaTaskar12, 
author = {Kulesza, Alex and Taskar, Ben}, 
title = {Determinantal Point Processes for Machine Learning}, 
year = {2012}, 
isbn = {1601986289}, 
publisher = {Now Publishers Inc.}
}

@book {CoverThomas06,
    AUTHOR = {Cover, Thomas M. and Thomas, Joy A.},
     TITLE = {Elements of information theory},
   EDITION = {Second},
 PUBLISHER = {Wiley-Interscience [John Wiley \& Sons], Hoboken, NJ},
      YEAR = {2006},
     PAGES = {xxiv+748},
      ISBN = {978-0-471-24195-9; 0-471-24195-4},
   MRCLASS = {00-01 (94-01)},
  MRNUMBER = {2239987},
}

@article {LiLindermanSzlamStantonKlugerTygert17,
    AUTHOR = {Li, Huamin and Linderman, George C. and Szlam, Arthur and
              Stanton, Kelly P. and Kluger, Yuval and Tygert, Mark},
     TITLE = {Algorithm 971: an implementation of a randomized algorithm for
              principal component analysis},
   JOURNAL = {ACM Trans. Math. Software},
  FJOURNAL = {Association for Computing Machinery. Transactions on
              Mathematical Software},
    VOLUME = {43},
      YEAR = {2017},
    NUMBER = {3},
     PAGES = {Art. 28, 14},
      ISSN = {0098-3500,1557-7295},
   MRCLASS = {62H25 (65C60 65Y15)},
  MRNUMBER = {3615284},
       DOI = {10.1145/3004053},
       URL = {https://doi.org/10.1145/3004053},
}

@article {EpperlyTroppWebber24,
    AUTHOR = {Epperly, Ethan N. and Tropp, Joel A. and Webber, Robert J.},
     TITLE = {{\sc {XT}race}: making the most of every sample in stochastic
              trace estimation},
   JOURNAL = {SIAM J. Matrix Anal. Appl.},
  FJOURNAL = {SIAM Journal on Matrix Analysis and Applications},
    VOLUME = {45},
      YEAR = {2024},
    NUMBER = {1},
     PAGES = {1--23},
      ISSN = {0895-4798,1095-7162},
   MRCLASS = {65C05 (65F55)},
  MRNUMBER = {4683960},
       DOI = {10.1137/23M1548323},
       URL = {https://doi.org/10.1137/23M1548323},
}

@article{BenziRinelliSimunec24,
  title={Estimation of spectral gaps for sparse symmetric matrices},
  author={Benzi, Michele and Rinelli, Michele and Simunec, Igor},
  journal={arXiv preprint arXiv:2410.15349},
  year={2024}
}

@article{GittensMahoney16,
    AUTHOR = {Gittens, Alex and Mahoney, Michael W.},
     TITLE = {Revisiting the {N}ystr\"om method for improved large-scale
              machine learning},
   JOURNAL = {J. Mach. Learn. Res.},
  FJOURNAL = {Journal of Machine Learning Research (JMLR)},
    VOLUME = {17},
      YEAR = {2016},
     PAGES = {Paper No. 117, 65},
      ISSN = {1532-4435,1533-7928},
   MRCLASS = {68W20 (65F99 68T05)},
  MRNUMBER = {3543523},
}

@article {EpperlyTropp24,
    AUTHOR = {Epperly, Ethan N. and Tropp, Joel A.},
     TITLE = {Efficient error and variance estimation for randomized matrix
              computations},
   JOURNAL = {SIAM J. Sci. Comput.},
  FJOURNAL = {SIAM Journal on Scientific Computing},
    VOLUME = {46},
      YEAR = {2024},
    NUMBER = {1},
     PAGES = {A508--A528},
      ISSN = {1064-8275,1095-7197},
   MRCLASS = {65F55 (62F40 68W20)},
  MRNUMBER = {4704674},
       DOI = {10.1137/23M1558537},
       URL = {https://doi.org/10.1137/23M1558537},
}

@article{Chansangiam13,
  author    = {Pattrawut Chansangiam},
  title     = {Operator {M}onotone {F}unctions: {C}haracterizations and {I}ntegral {R}epresentations},
  journal   = {Mathematics Reports (Math. Rep.)},
  volume    = {15},
  number    = {65},
  year      = {2013},
  pages     = {379--391},
  publisher = {Publishing House of the Romanian Academy}
}

@article {LiZhu21,
    AUTHOR = {Li, Hanyu and Zhu, Yuanyang},
     TITLE = {Randomized block {K}rylov subspace methods for trace and
              log-determinant estimators},
   JOURNAL = {BIT},
  FJOURNAL = {BIT. Numerical Mathematics},
    VOLUME = {61},
      YEAR = {2021},
    NUMBER = {3},
     PAGES = {911--939},
      ISSN = {0006-3835,1572-9125},
   MRCLASS = {68W20 (15A15 15A18 68W25)},
  MRNUMBER = {4292452},
       DOI = {10.1007/s10543-021-00850-7},
       URL = {https://doi.org/10.1007/s10543-021-00850-7},
}

@article {ChenHallman23,
    AUTHOR = {Chen, Tyler and Hallman, Eric},
     TITLE = {Krylov-aware stochastic trace estimation},
   JOURNAL = {SIAM J. Matrix Anal. Appl.},
  FJOURNAL = {SIAM Journal on Matrix Analysis and Applications},
    VOLUME = {44},
      YEAR = {2023},
    NUMBER = {3},
     PAGES = {1218--1244},
      ISSN = {0895-4798,1095-7162},
   MRCLASS = {65F60 (15A16)},
  MRNUMBER = {4629849},
MRREVIEWER = {Faezeh\ Toutounian},
       DOI = {10.1137/22M1494257},
       URL = {https://doi.org/10.1137/22M1494257},
}

@article {PerssonCortinovisKressner22,
    AUTHOR = {Persson, David and Cortinovis, Alice and Kressner, Daniel},
     TITLE = {Improved variants of the {H}utch++ algorithm for trace
              estimation},
   JOURNAL = {SIAM J. Matrix Anal. Appl.},
  FJOURNAL = {SIAM Journal on Matrix Analysis and Applications},
    VOLUME = {43},
      YEAR = {2022},
    NUMBER = {3},
     PAGES = {1162--1185},
      ISSN = {0895-4798,1095-7162},
   MRCLASS = {65C05 (68W40)},
  MRNUMBER = {4451312},
       DOI = {10.1137/21M1447623},
       URL = {https://doi.org/10.1137/21M1447623},
}

@article {FrommerRinelliSchweitzer25,
    AUTHOR = {Frommer, Andreas and Rinelli, Michele and Schweitzer, Marcel},
     TITLE = {Analysis of stochastic probing methods for estimating the
              trace of functions of sparse symmetric matrices},
   JOURNAL = {Math. Comp.},
  FJOURNAL = {Mathematics of Computation},
    VOLUME = {94},
      YEAR = {2025},
    NUMBER = {352},
     PAGES = {801--823},
      ISSN = {0025-5718,1088-6842},
   MRCLASS = {65F50 (65C05 65F60)},
  MRNUMBER = {4841487},
       DOI = {10.1090/mcom/3984},
       URL = {https://doi.org/10.1090/mcom/3984},
}

@article {Saad94,
    AUTHOR = {Saad, Y.},
     TITLE = {Analysis of some {K}rylov subspace approximations to the
              matrix exponential operator},
   JOURNAL = {SIAM J. Numer. Anal.},
  FJOURNAL = {SIAM Journal on Numerical Analysis},
    VOLUME = {29},
      YEAR = {1992},
    NUMBER = {1},
     PAGES = {209--228},
      ISSN = {0036-1429},
   MRCLASS = {65F10 (65D20 65F30)},
  MRNUMBER = {1149094},
MRREVIEWER = {A.\ Barraud},
       DOI = {10.1137/0729014},
       URL = {https://doi.org/10.1137/0729014},
}

@InProceedings{TroppYurtsverUdellCevher17pt2, author = {Tropp, Joel A. and Yurtsever, Alp and Udell, Madeleine and Cevher, Volkan}, title = {Fixed-rank approximation of a positive-semidefinite matrix from streaming data}, year = {2017}, isbn = {9781510860964}, publisher = {Curran Associates Inc.}, booktitle = {Proceedings of the 31st International Conference on Neural Information Processing Systems}, pages = {1225–1234}, numpages = {10}, location = {Long Beach, California, USA}, series = {NIPS'17} }

@book{Matern60,
    AUTHOR = {Mat\'ern, Bertil},
     TITLE = {Spatial variation},
    SERIES = {Lecture Notes in Statistics},
    VOLUME = {36},
   EDITION = {Second},
      NOTE = {With a Swedish summary},
 PUBLISHER = {Springer-Verlag, Berlin},
      YEAR = {1986},
     PAGES = {151},
      ISBN = {3-540-96365-0},
   MRCLASS = {62D05 (62E99)},
  MRNUMBER = {867886},
       DOI = {10.1007/978-1-4615-7892-5},
       URL = {https://doi.org/10.1007/978-1-4615-7892-5},
}

@article {SorensenEmbree16,
    AUTHOR = {Sorensen, D. C. and Embree, Mark},
     TITLE = {A {DEIM} induced {CUR} factorization},
   JOURNAL = {SIAM J. Sci. Comput.},
  FJOURNAL = {SIAM Journal on Scientific Computing},
    VOLUME = {38},
      YEAR = {2016},
    NUMBER = {3},
     PAGES = {A1454--A1482},
      ISSN = {1064-8275,1095-7197},
   MRCLASS = {65F30 (15A23 47A58 65F35 68W25)},
  MRNUMBER = {3502613},
MRREVIEWER = {Jos\'e-Javier\ Mart\'inez},
       DOI = {10.1137/140978430},
       URL = {https://doi.org/10.1137/140978430},
}

@article {SantinSchaback16,
    AUTHOR = {Santin, G. and Schaback, R.},
     TITLE = {Approximation of eigenfunctions in kernel-based spaces},
   JOURNAL = {Adv. Comput. Math.},
  FJOURNAL = {Advances in Computational Mathematics},
    VOLUME = {42},
      YEAR = {2016},
    NUMBER = {4},
     PAGES = {973--993},
      ISSN = {1019-7168,1572-9044},
   MRCLASS = {41A46 (41A30 42C15 45C05 46E22)},
  MRNUMBER = {3533523},
MRREVIEWER = {Jan\ Vyb\'iral},
       DOI = {10.1007/s10444-015-9449-5},
       URL = {https://doi.org/10.1007/s10444-015-9449-5},
}

@inproceedings{Belkin18,
  title={Approximation beats concentration? {A}n approximation view on inference with smooth radial kernels},
  author={Mikhail Belkin},
  booktitle={Annual Conference Computational Learning Theory},
  year={2018},
  url={https://api.semanticscholar.org/CorpusID:9010591}
}

@incollection {MeyerMuscoWoodruff21,
    AUTHOR = {Meyer, Raphael A. and Musco, Cameron and Musco, Christopher
              and Woodruff, David P.},
     TITLE = {Hutch++: optimal stochastic trace estimation},
 BOOKTITLE = {Symposium on {S}implicity in {A}lgorithms ({SOSA})},
     PAGES = {142--155},
 PUBLISHER = {[Society for Industrial and Applied Mathematics (SIAM)],
              Philadelphia, PA},
      YEAR = {2021},
      ISBN = {978-1-61197-649-6},
   MRCLASS = {68W20 (15A15)},
  MRNUMBER = {4537958},
}

@article{ChenHuberLinZaid26,
author = {Chen, Tyler and Huber, Caroline and Lin, Ethan and Zaid, Hajar},
year = {2026},
month = {01},
pages = {63-92},
title = {Preconditioning without a preconditioner using randomized block {K}rylov subspace methods},
volume = {65},
journal = {ETNA - Electronic Transactions on Numerical Analysis},
doi = {10.1553/etna_vol65s63}
}

\end{document}